\def\NZQ{\mathbb}               
\def\NN{{\NZQ N}}
\def\ZZ{{\NZQ Z}}
\def\RR{{\NZQ R}}
\def\PP{{\NZQ P}}
\newtheorem{Theorem}{Theorem}[section]
\newtheorem{Lemma}[Theorem]{Lemma}
\newtheorem{Corollary}[Theorem]{Corollary}
\newtheorem{Proposition}[Theorem]{Proposition}
\newtheorem{Example}[Theorem]{Example}
\let\epsilon\varepsilon
\let\phi=\varphi
\let\kappa=\varkappa
\begin{document}
\title{The asymptotic  growth of graded linear series on arbitrary projective schemes}
\author{Steven Dale Cutkosky }
\thanks{Partially supported by NSF}

\address{Steven Dale Cutkosky, Department of Mathematics,
University of Missouri, Columbia, MO 65211, USA}
\email{cutkoskys@missouri.edu}

\begin{abstract} Recently,  Okounkov \cite{Ok1}, Lazarsfeld and Mustata \cite{LM}, and Kaveh and Khovanskii \cite{KK} have shown that the growth of a  graded linear series
on a projective variety over an algebraically closed field is asymptotic to a polynomial. We give a complete description of the possible asymptotic growth of
graded linear series on projective schemes over a perfect field. If the scheme is reduced, then the growth is polynomial like, but the growth can be very complex
on nonreduced schemes. 

We also give an example of a graded family of $m$-primary ideals $\{I_n\}$ in a nonreduced $d$-dimensional local ring $R$, such that the length of $R/I_n$ divided by $n^d$  does not have a limit, even when restricted to any  arithmetic sequence.
\end{abstract}

\maketitle
\section{Introduction} 
 In this paper we investigate the asymptotic growth of a graded linear series on an arbitrary projective scheme over a perfect field.
Recent results of Okounkov \cite{Ok1}, Lazarsfeld and Mustata \cite{LM}, and Kaveh and Khovanskii \cite{KK} show the remarkable fact that on a projective variety  over an algebraically closed field $k$, the dimension $\dim_kL_n$ of a graded linear series $L$ is asymptotic for large $n$ to the value of a polynomial; that is,
$$
\lim_{n\rightarrow \infty}\frac{\dim_kL_{mn}}{n^q}
$$
exists, where $m$ is the index of $L$ and $q\ge 0$ is the Kodiara-Iitaka dimension of $L$.
We recall  this result in Theorem \ref{Theorem10} below, which is the  general statement of Kaveh and Khovanskii \cite{KK}. The limit can be irrational, even when $L$ is the section ring of a big line bundle,
as is shown by Srinivas and the author  in Example 4 of Section 7 \cite{CS}.

It is  natural to consider the question of the existence of such limits when we loosen  these conditions. 
We give a complete description of how much of  Theorem \ref{Theorem10} extends and how much does not extend to arbitrary projective schemes over a perfect field. In Theorem \ref{Theorem13} we show that the exact statement of Theorem \ref{Theorem10} holds for graded linear series on a projective variety over a perfect field. In Theorem \ref{Theorem18} we show that the theorem generalizes very well to graded linear series on a reduced projective variety over a perfect field,
although the statement requires a slight modification. The conclusion is that there is a positive integer $r$ such that for any integer $a$, the limit 
$$
\lim_{n\rightarrow \infty}\frac{\dim_kL_n}{n^q}
$$
exists whenever $n$ is constrained to line in the arithmetic sequence $a+br$. Here $q$ is the Kodaira-Iitaka dimension of $L$. A nontrivial example
on a connected, reduced, equidimensional  but not irreducible projective scheme is given in Example \ref{Example3}.

This is however the extent to which Theorem \ref{Theorem10} generalizes. We give a series of examples of graded linear series on non reduced projective schemes
where such limits do not exist. 

The most significant example is Example \ref{Example25}, which is of
a ``big'' graded linear system $L$ with maximal Kodaira-Iitaka dimension $d$ on a nonreduced but irreducible $d$-dimensional projective scheme such that the limit 
$$
\lim_{n\rightarrow \infty}\frac{\dim_kL_n}{n^d}
$$
does not exist, even when $n$ is constrained to lie in {\it any} arithmetic sequence. 

We give a related example, Example \ref{Example1}, showing the  failure of limits of lengths of quotients of a graded family of $m_R$-primary ideals $\{I_n\}$ in the $d$-dimensional local ring $R=k[[x_1,\ldots,x_d,y]]/y^2$. In
the example,
the limit 
\begin{equation}\label{eqI7}
\lim_{n\rightarrow \infty}\frac{\ell_R(R/I_n)}{n^d}
\end{equation}
does not exist, even when $n$ is constrained to lie in {\it any} arithmetic sequence. 

Such limits for graded families of ideals in local rings are shown to exist in many cases by work of Ein, Lazarsfeld and Smith \cite{ELS}, Mustata \cite{Mus}, Lazarsfeld and Mustata \cite{LM} and of the author \cite{C}. In Theorem 5.9 of \cite{C}, we show that the limit (\ref{eqI7}) always exists when $\{I_n\}$ is a graded family of  $m_R$-primary ideals in a $d$-dimensional  analytically unramified  equicharacteristic local ring with perfect residue field, and give a number of applications of this result. 
It follows from Example \ref{Example1} that the assumption of analytically unramified cannot be removed from  Theorem 5.9 \cite{C}. 

A fundamental property of the Kodaira-Iitaka dimension $q(L)$ for a graded linear series $L$ on a reduced projective scheme with $q(L)\ge 0$ is that there exists a 
constant $\beta$ such that there is an upper bound
$$
\dim_kL_n<\beta n^{q(L)}
$$
for all $n$. 
This is a classical result for complete linear systems on varieties, and is part of the foundations of the Kodaira-Iitaka  dimension (Theorem 10.2 \cite{I}). A proof of this inequality for graded linear series on reduced projective varieties over perfect fields is given in this paper in Corollary \ref{Cor72}. However, this equality may fail on non reduced projective schemes. We always have an upper bound
$$
\dim_kL_n<\gamma n^{d}
$$
where $d$ is the dimension of the scheme (\ref{eqKI4}), but it is possible on a $d$-dimensional nonreduced projective scheme to have $q(L)=-\infty$ and  have growth of  order  $n^d$.
We give a simple example where this happens in Example \ref{Example2}. In fact, it is quite easy to construct badly behaved examples with $q(L)=-\infty$, since in this case the condition that $L_mL_n\subset L_{m+n}$ required for a graded linear series may be  vacuous. 

We give in Example \ref{Example16} an example of a section ring of a line bundle $\mathcal N$ on a non reduced but irreducible $d$-dimensional projective scheme $Z$ with growth of 
order $n^{d-1}$ such that
for any  positive integer $r$,  there exists an integer $a$ such that the limit 
$$
\lim_{n\rightarrow\infty}\frac{\dim_k\Gamma(Z,\mathcal N^n)}{n^{d-1}}
$$
does not exist when $n$ is constrained to lie in the arithmetic sequence $a+br$.

Even on nonreduced projective schemes, we do have the classical property of Kodaira-Iitaka dimension that if $q(L)\ge 0$, then there is a positive constant $\alpha$ and a positive integer $m$ such that
$$
\alpha n^{q(L)}<\dim_kL_{mn}
$$
for all integers $n$ (\ref{eqKI2}).

The volume of a line bundle $\mathcal L$ on a $d$-dimensional variety $X$ is the limsup
\begin{equation}\label{in2}
\mbox{Vol}(\mathcal L)=\limsup_{n\rightarrow\infty}\frac{h^0(X,\mathcal L^n)}{n^d/d!}.
\end{equation}
There has  been spectacular progress of our understanding of  the volume as a function on the big cone in $N^1(X)$ on a projective variety $X$
over an algebraically closed field (where (\ref{in2}) is actually a limit). Much of the  theory  is explained in \cite{La}, where extensive  references are given. Volume is continuous  on $N^1(X)$ but is not twice differentiable on all of $N^1(X)$ (as shown in an example of Ein Lazarsfeld, Mustata, Nakamaye and Popa, \cite{ELMNP1}).   Bouksom, Favre and Jonsson \cite{BFJ} have shown that the volume is $\mathcal C^1$-differentiable on the big cone of $N^1(X)$. Interpretation of the directional derivative in terms of intersection products and many interesting applications are given in \cite{BFJ}, \cite{ELMNP1} and \cite{LM}.

The starting point of the theory of volume on nonreduced schemes is to determine if the limsup defined in (\ref{in2}) exists as a limit. We see from Examples \ref{Example25} and \ref{Example2}  that the limit does  not always exist for graded linear series $L$. However, neither of these examples are section rings of a line bundle. The examples are on
the nonreduced scheme $X$ which is  a double linear hyperplane in a projective space $\PP^{d+1}$. All line bundles on $X$ are restrictions of line bundles on $\PP^{d+1}$, so that if $\mathcal L$ is a line bundle on $X$, then $h^0(X,\mathcal L^n)$ is actually a polynomial in $n$, and $\mbox{Vol}(\mathcal L)$ not only exists as a limit, it is even a rational number.

We essentially use the notation of Hartshorne \cite{H}. For instance, a variety is required to be integral. We will denote the maximal ideal of a local ring $R$ by $m_R$. If $\nu$ is a valuation of a field $K$, then we will write $V_{\nu}$ for the valuation ring of $\nu$, and $m_{\nu}$ for the maximal ideal of $V_{\nu}$. We will write $\Gamma_{\nu}$ for the value group of $\nu$. If $A$ and $B$ are local rings, we will say that $B$ dominates $A$ if $A\subset B$ and $m_B\cap A=m_A$.

\section{Graded linear series and the Kodaira-Iitaka dimension}\label{Section2} 

Suppose that $X$ is a $d$-dimensional projective scheme over a field $k$, and $\mathcal L$ is a line bundle on $X$. 
Then under the natural inclusion of rings $k\subset \Gamma(X,\mathcal O_X)$, we have that the section ring
$$
\bigoplus_{n\ge 0}\Gamma(X,\mathcal L^n)
$$
is a graded $k$-algebra. A graded $k$-subalgebra $L=\bigoplus_{n\ge 0}L_n$ of a section ring of a line bundle $\mathcal L$ on $X$ is called a {\it graded linear series} for $\mathcal L$. 

 We define the {\it Kodaira-Iitaka dimension} $q=q(L)$ of a graded linear series $L$  as follows.
Let
$$
\sigma(L)=\max \left\{m\mid 
\begin{array}{l}
 \mbox{there exists $y_1,\ldots,y_m\in L$ which are homogeneous of  positive}\\
\mbox{degree and are algebraically independent over $k$}
\end{array}\right\}.
$$
$q(L)$ is then defined as
$$
q(L)=\left\{\begin{array}{ll}
\sigma(L)-1 &\mbox{ if }\sigma(L)>0\\
-\infty&\mbox{ if }\sigma(L)=0
\end{array}\right.
$$

This definition is in agreement with the classical definition for line bundles on projective varieties (Definition in Section 10.1 \cite{I}).
We give a summary of a few formulas which hold for the Kodaira-Iitaka dimension on general projective schemes. We defer proofs of these formulas to the appendix at the end of this paper. 

\begin{Lemma}\label{LemmaKI} Suppose that $L$ is a graded linear series on a $d$-dimensional projective scheme $X$ over a field $k$. Then
\begin{enumerate}\item[1.]
\begin{equation}\label{eqKI1}
q(L)\le d=\dim X.
\end{equation}
\item[2.] There exists a positive constant $\gamma$ such that 
\begin{equation}\label{eqKI4}
\dim_k L_n<\gamma n^d
\end{equation}
for all $n$.
\item[3.] Suppose that $q(L)\ge 0$. Then there exists a positive constant $\alpha$ and a positive integer $e$ such that 
\begin{equation}\label{eqKI2}
\dim_kL_{en}>\alpha n^{q(L)}
\end{equation}
for all positive integers $n$. 
\item[4.] Suppose that $X$ is reduced and $L$ is a graded linear series on $X$. Then $q(L)=-\infty$ if and only if $L_n=0$ for all $n>0$.

\end{enumerate}
\end{Lemma}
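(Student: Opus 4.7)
The plan is to prove the four statements in the order (2), (1), (3), (4). Parts (2) and (3) do the substantive work, (1) falls out immediately from (2) by a lattice-point count, and (4) is the only place where reducedness enters. I expect the main technical obstacle to be part (2): bounding $h^0(X, \mathcal{L}^n) = O(n^d)$ on a possibly nonreduced, reducible scheme for an arbitrary line bundle requires a careful d\'evissage argument, whereas the other three parts follow by essentially elementary graded algebra once the upper bound is in hand.

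For part (2), since $L_n$ is a $k$-subspace of $\Gamma(X, \mathcal{L}^n)$, it suffices to show $h^0(X, \mathcal{L}^n) = O(n^d)$. I would filter $\mathcal{O}_X$ by coherent subsheaves whose successive quotients are structure sheaves $\mathcal{O}_Y$ of integral closed subschemes $Y \subset X$ of dimension at most $d$, then use the resulting short exact sequences to reduce to bounding each $h^0(Y, \mathcal{L}^n|_Y)$ for $Y$ integral. On such $Y$, fix a very ample $\mathcal{H}$ and choose $m$ large so that $\mathcal{M}_1 := \mathcal{L}|_Y \otimes \mathcal{H}^m$ is also very ample; then $\mathcal{L}|_Y \cong \mathcal{M}_1 \otimes \mathcal{H}^{-m}$. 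For every $n \ge 1$, multiplication by any nonzero $s \in \Gamma(Y, \mathcal{H}^{mn})$ embeds $\mathcal{L}^n|_Y$ into $\mathcal{M}_1^n$ (the integrality of $Y$ makes $s$ a non-zero-divisor), and $h^0(Y, \mathcal{M}_1^n)$ is eventually the Hilbert polynomial, of degree $\dim Y \le d$, giving the desired bound.

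For part (1), algebraically independent homogeneous elements $y_1, \ldots, y_m \in L$ of positive degrees $d_1, \ldots, d_m$ generate a polynomial subring $k[y_1, \ldots, y_m] \subset L$ whose degree-$n$ piece has dimension equal to the number of lattice points $(a_1, \ldots, a_m) \in \NN^m$ with $\sum a_i d_i = n$; this count is asymptotic to $c\, n^{m-1}$ for a positive constant $c$, so the bound from (2) forces $m - 1 \le d$. For part (3), starting from a maximal algebraically independent set $y_1, \ldots, y_{q+1} \in L$ with $q = q(L)$, let $e$ be any common multiple of the degrees $d_i$ and set $z_i := y_i^{e/d_i} \in L_e$. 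The $z_i$ remain algebraically independent, so $k[z_1, \ldots, z_{q+1}] \subset \bigoplus_n L_{en}$ is a standard-graded polynomial ring whose degree-$n$ piece has dimension $\binom{n + q}{q} \sim n^q/q!$, giving the required lower bound.

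For part (4), one direction is trivial. Conversely, suppose $X$ is reduced and some $L_n$ with $n > 0$ contains a nonzero element $y$. Local trivializations of $\mathcal{L}^n$ represent $y$ as a regular function, and reducedness of $X$ precludes any nonzero local function from being nilpotent, so $y^k \ne 0$ in $\Gamma(X, \mathcal{L}^{nk})$ for every $k \ge 1$. Any polynomial relation $\sum_{i=0}^r c_i y^i = 0$ with $c_i \in k$ decomposes into homogeneous pieces of distinct degrees $in$, forcing each $c_i y^i = 0$; since $c_i$ is a scalar and $y^i \ne 0$, every $c_i$ vanishes. Thus $y$ is transcendental over $k$, $\{y\}$ is algebraically independent, and $q(L) \ge 0$.
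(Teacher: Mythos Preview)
Your proof is correct, and part (4) matches the paper's argument almost exactly. For (1)--(3), however, you take a genuinely different route.

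For (2), the paper does not d\'evissage to integral pieces. Instead it writes $X=\mbox{Proj}(A)$ with $A=R/I$ and $I$ saturated, uses graded prime avoidance to find a homogeneous nonzerodivisor $F\in A$ of some degree $c$, and thus obtains an injection $\mathcal O_X\hookrightarrow \mathcal O_X(c)$ directly on the possibly nonreduced $X$. Iterating gives $\mathcal L\hookrightarrow \mathcal A:=\mathcal L\otimes\mathcal O_X(cf)$ with $\mathcal A$ ample, and then $h^0(X,\mathcal A^n)$ is eventually a polynomial of degree $d$. Your filtration argument reaches the same conclusion but trades the single prime-avoidance step for a reduction to the integral case; the paper's approach is shorter, while yours avoids needing to know that the irrelevant ideal is not associated to a saturated $I$.

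For (1) and (3), the paper first proves an auxiliary lemma: if $L$ is a finitely generated $L_0$-algebra then $q(L)=\mbox{Krull dimension}(L)-1$ (via graded Noether normalization). It then deduces (1) from the chain of inclusions $L\subset B=\bigoplus_n\Gamma(X,\mathcal A^n)$ with $B$ finitely generated of Krull dimension $d+1$, and deduces (3) by passing to a finitely generated subalgebra $L^i\subset L$ with $q(L^i)=q(L)$, taking a Veronese to make it standard graded, and invoking its Hilbert polynomial. Your arguments bypass this lemma entirely: for (1) you count lattice points in the weighted polynomial subring and compare against the bound from (2); for (3) you raise the $y_i$ to powers to equalize degrees and count monomials directly. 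Your route is more elementary and self-contained; the paper's route is more structural, and its auxiliary lemma (linking $q$ to Krull dimension) is of some independent interest.
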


\section{Some remarkable limits}
 
Suppose that $L$ is a graded linear series on a projective variety $X$. 
The {\it index} $m=m(L)$ of $L$ is defined as the index of groups
$$
m=[\ZZ:G]
$$
where $G$ is the subgroup of $\ZZ$ generated by $\{n\mid L_n\ne 0\}$.

\begin{Theorem}\label{Theorem10} (Okounkov  \cite{Ok1}, Lazarsfeld and Mustata \cite{LM}, Kaveh and Khovanskii \cite{KK}) Suppose that $X$ is a projective variety over an algebraically closed field $k$, and $L$ is a graded linear series on $X$. Let $m=m(L)$ be the index of $L$ and $q=q(L)\ge 0$ be the Kodaira-Iitaka dimension of $L$.  Then  
$$
\lim_{n\rightarrow \infty}\frac{\dim_k L_{nm}}{n^q}
$$
exists. 
\end{Theorem}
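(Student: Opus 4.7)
The plan is to realize $\dim_k L_{nm}$ as the number of lattice points at level $n$ in a well-chosen semigroup attached to $L$, and then invoke the asymptotic lattice-point theorem for semigroups of integral points, in the form developed by Kaveh--Khovanskii. I will follow the Okounkov body construction.

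First I would fix a rank-$d$ valuation $\nu\colon k(X)^{*}\to \ZZ^{d}$ on the function field of $X$, with $\ZZ^{d}$ ordered lexicographically and with residue field equal to $k$. Such a ``flag valuation'' exists because $X$ is an integral $d$-dimensional projective scheme over the algebraically closed field $k$: one picks a regular system of parameters at a smooth closed point of $X$ (or, more generally, a flag of subvarieties $X=Y_{0}\supset Y_{1}\supset\cdots\supset Y_{d}$ that is smooth at a common closed point) and reads off successive orders of vanishing. Next I would choose a nonzero section $t\in L_{m}$, possible since $m$ is the index, and define the Okounkov semigroup
$$
\Gamma(L)\;=\;\{(n,\,\nu(s)-n\nu(t))\mid 0\neq s\in L_{nm},\ n\ge 0\}\;\subset\;\ZZ_{\ge 0}\times \ZZ^{d}.
$$
Multiplication of sections in the section ring makes this a sub-semigroup, and since $L_{nm}\cdot L_{n'm}\subset L_{(n+n')m}$, the level structure is preserved.

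The decisive bookkeeping step is the identification $\#\Gamma(L)_{n}=\dim_{k}L_{nm}$ for every $n$. This follows from the standard ``distinct valuation $\Rightarrow$ linear independence'' argument, together with the fact that the residue field is $k$: if $s,s'\in L_{nm}$ share the same value $\nu(s)=\nu(s')$, there is a unique $c\in k$ with $\nu(s-cs')>\nu(s)$, so iterating produces a $k$-basis of $L_{nm}$ in bijection with $\Gamma(L)_{n}$. Simultaneously, I would verify that the closed convex cone $\mathrm{Cone}(\Gamma(L))\subset \RR^{d+1}$ has a slice at level $1$ — the Okounkov body $\Delta(L)$ — of affine-linear dimension exactly $q=q(L)$: algebraic independence of homogeneous elements in $L$ translates, via the valuation, into affine independence of their normalized value vectors, and conversely. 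The index $m$ was inserted into the definition precisely so that $\Gamma(L)$ meets every level $n\ge 1$ nontrivially once $n$ is large.

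Finally I would invoke Khovanskii's theorem on asymptotics of semigroups of integral points (e.g.\ Corollary 1.18 of \cite{KK}): if $\Gamma\subset \ZZ_{\ge 0}\times \ZZ^{d}$ is a semigroup whose cone is strictly convex and whose levels are eventually nonempty, then
$$
\lim_{n\to\infty}\frac{\#\Gamma_{n}}{n^{q}}\;=\;\mathrm{vol}_{q}(\Delta),
$$
where $q$ is the dimension of the level-$1$ slice $\Delta$ of the cone and the volume is taken with respect to the lattice in the affine span of $\Delta$. Applied to $\Gamma(L)$, this delivers the desired limit
$$
\lim_{n\to\infty}\frac{\dim_{k}L_{nm}}{n^{q}}\;=\;\mathrm{vol}_{q}(\Delta(L)).
$$

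The main obstacle is the last step: checking that $\Gamma(L)$ actually satisfies the combinatorial hypotheses of the abstract asymptotic theorem. One must control the ``defect'' of $\Gamma(L)$ inside the lattice points of its cone (the semigroup is not finitely generated in general, so one cannot argue by Hilbert polynomial methods); showing the defect is sub-maximal uses that $X$ is a variety (so $\nu$ is a valuation on a field, not a pseudo-valuation on a ring with zero-divisors) and the rank-$d$, residue-field-$k$ structure of $\nu$. Everything downstream is formal once this input is in place.
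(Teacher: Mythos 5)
Your proposal is correct and follows essentially the same route as the paper, which obtains this statement via the argument of Theorem \ref{Theorem53}: a $\ZZ^d$-valued valuation of $k(X)$ with residue field $k$ (built from a regular system of parameters at a nonsingular closed point), the one-dimensional-leaves identity $\dim_k I_\gamma/I_\gamma^+=1$ giving $\dim_k L_{nm}=|S_{nm}|$, and then Theorems 2.30 and 1.26 of \cite{KK} applied to the semigroup $S(L)\subset \ZZ^{d+1}$ --- precisely the ``strong nonnegativity'' verification you identify as the remaining obstacle. The only slip is the choice of a nonzero $t\in L_m$: the index only guarantees $L_{nm}\neq 0$ for $n\gg 0$, not $L_m\neq 0$, so one should normalize instead as the paper does, writing $\mathcal L\cong\mathcal O_X(D)$ with $D$ avoiding the center of $\nu$ and evaluating $\nu$ on the resulting rational functions, which removes the need for a reference section and leaves the rest of your argument unchanged.
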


In particular, from the definition of the index, we have that the limit
$$
\lim_{n\rightarrow \infty}\frac{\dim_k L_{n}}{{n}^q}
$$
exists, whenever $n$ is constrained to lie in an arithmetic sequence $a+bm$ ($m=m(L)$ and $a$ an arbitrary but fixed constant).

It follows that $\dim_kL_n=0$ if $m\not\,\mid n$, and if $q(L)\ge 0$, then there exist positive constants $\alpha<\beta$ such that
\begin{equation}\label{eq61}
\alpha n^q<\dim_kL_{nm}<\beta n^q
\end{equation}
for all sufficiently large positive integers $n$

The proof of the theorem is by an ingenious method, reducing the problem to computing the volume of a section (the Newton-Okounkov body) of an appropriate cone.

\begin{Corollary}\label{Theorem12}(Okounkov  \cite{Ok1}, Lazarsfeld and Mustata \cite{LM}) Suppose that $X$ is a projective variety of dimension $d$ over an algebraically closed field $k$, and $\mathcal L$ is a big line bundle on $X$ (the Kodaira-Iitaka dimension of the section ring of $\mathcal L$ is $d$).  Then  the limit
$$
\lim_{n\rightarrow \infty}\frac{\dim_k \Gamma(X,\mathcal L^n)}{n^d}
$$
exists. 
\end{Corollary}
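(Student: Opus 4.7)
The plan is to deduce this from Theorem \ref{Theorem10} applied to the full section ring $L = \bigoplus_{n \ge 0} \Gamma(X, \mathcal L^n)$, viewed as a graded linear series on $X$. Bigness of $\mathcal L$ is, by the parenthetical clause in the statement, exactly the assertion $q(L) = d$, so Theorem \ref{Theorem10} yields existence of
$$
\lim_{n \to \infty} \frac{\dim_k L_{nm}}{n^d},
$$
where $m = m(L)$ is the index of $L$. Since the corollary asks for the limit over all $n$, the entire task reduces to proving $m(L) = 1$.

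To see this, I would show that $\dim_k \Gamma(X, \mathcal L^n) > 0$ for every sufficiently large $n$; this forces the generating set of the subgroup $G \subset \ZZ$ in the definition of the index to contain two consecutive positive integers, whence $G = \ZZ$ and $m = 1$. This is the Kodaira-lemma step: since $\mathcal L$ is big, there exist an integer $m_0 \ge 1$, an ample line bundle $\mathcal A$ on $X$, and an effective divisor $E$ on $X$ with
$$
\mathcal L^{m_0} \cong \mathcal A \otimes \mathcal O_X(E).
$$
Let $s_E \in \Gamma(X, \mathcal O_X(E))$ be the canonical section of $E$. By Serre's theorem on ample twists, for each $j \in \{0, 1, \ldots, m_0 - 1\}$ there is an integer $\ell_j$ such that $\mathcal A^{\ell} \otimes \mathcal L^j$ is globally generated (and hence has a nonzero section) for all $\ell \ge \ell_j$. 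Setting $\ell_0 = \max_j \ell_j$, any $n \ge \ell_0 m_0$ can be written as $n = \ell' m_0 + j$ with $\ell' \ge \ell_0$ and $0 \le j < m_0$, so
$$
\mathcal L^n \cong \mathcal A^{\ell'} \otimes \mathcal L^j \otimes \mathcal O_X(\ell' E),
$$
and the product of a nonzero section of $\mathcal A^{\ell'} \otimes \mathcal L^j$ with $s_E^{\ell'}$ is a nonzero section of $\mathcal L^n$ (nonvanishing since $X$ is integral).

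The main obstacle is this second step: turning the asymptotic positivity encoded by bigness into section existence in \emph{every} sufficiently large degree, rather than merely along some arithmetic progression of the form $a + bm_0$. Kodaira's lemma together with Serre's theorem on ample twists is the standard device for bridging this gap, and once $m(L) = 1$ has been established the corollary is immediate from Theorem \ref{Theorem10}.
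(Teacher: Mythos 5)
Your argument is correct and follows essentially the same route as the paper, which proves the integral case of its Corollary \ref{Theorem14} by exactly this reduction: use bigness to show $m(L)=1$ (the paper does the Kodaira-lemma step in-line, comparing the growth bound (\ref{eqKI2}) with growth on a hyperplane section to get $\Gamma(X,\mathcal L^{ne}\otimes\mathcal O_X(-H))\ne 0$, then twists by a globally generated $\mathcal L\otimes\mathcal O_X(fH)$ to produce sections in consecutive degrees), and then apply Theorem \ref{Theorem10}/\ref{Theorem13}. Your only variation is cosmetic: you cite Kodaira's lemma and Serre's theorem for each residue class to get nonvanishing in every large degree, whereas the paper is content with nonvanishing in two consecutive degrees, which already forces the index to be $1$.
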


This corollary was earlier proven using Fujita approximation, \cite{Fuj}, Example 11.4.7 \cite{La}, \cite{T}.

An example of a big line bundle where the limit in Theorem \ref{Theorem10} and Corollary \ref{Theorem12} is an irrational number is given in Example 4 of Section 7 \cite{CS}.

\section{Limits on varieties over non closed fields}

 The proof of Koveh and Khovanskii of Theorem \ref{Theorem10} actually shows the following. 
\begin{Theorem}(Koveh and Khovanskii)\label{Theorem53}
Suppose that $X$ is a $d$-dimensional projective variety over an arbitrary field $k$ and there exists a valuation $\nu$ of the function field  $k(X)$ of $X$ which vanishes on nonzero elements of $k$, has a value group $\Gamma_{\nu}$ which is isomorphic as a group to $\ZZ^d$, and the residue field of $V_{\nu}/m_{\nu}$ is $k$.
Then the conclusions of Theorem \ref{Theorem10} are valid for any graded linear series $L$ on $X$ with $q(L)\ge 0$.
\end{Theorem}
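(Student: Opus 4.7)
The plan is to run the Kaveh-Khovanskii proof of Theorem \ref{Theorem10} in this setting: the only role played by the hypothesis that $k$ is algebraically closed in their proof is to guarantee the existence of a valuation $\nu$ on $k(X)$ with value group $\ZZ^d$, residue field $k$, and trivial on $k^*$, which is precisely what is being assumed here. So it suffices to check that once such a $\nu$ is fixed, the Kaveh-Khovanskii argument goes through essentially unchanged.

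First I would fix a nonzero rational section $s_0$ of the ambient line bundle of $L$ and use division by $s_0^n$ to regard each $L_n$ as a $k$-subspace of $k(X)$. Combining with $\nu$ gives, for each $n$, a map $\phi_n : L_n \setminus \{0\} \to \ZZ^d$, and the set
$$
S(L) = \bigcup_{n\ge 0}\{n\}\times \phi_n(L_n\setminus\{0\}) \subset \ZZ_{\ge 0}\times \ZZ^d
$$
is a subsemigroup of $\ZZ^{d+1}$ since $\nu$ is additive and $L_mL_n\subset L_{m+n}$.

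The crucial step, and the one place where both $\Gamma_\nu\iso \ZZ^d$ and $V_\nu/m_\nu=k$ enter, is the identity $\dim_kL_n = \#\phi_n(L_n\setminus\{0\})$. Given $f_1,\ldots,f_r\in L_n$ with pairwise distinct $\nu$-values, any nontrivial $k$-linear relation among them would force $\nu$ of the sum to equal the minimum of the $\nu(f_i)$, a contradiction. Conversely, if $f_1,\ldots,f_r$ all share a common value $v$, then, since $\Gamma_\nu$ is a discrete subgroup of itself, we may choose an element $g$ of value $v$; the ratios $f_i/g$ lie in $V_\nu$, and their residues lie in $V_\nu/m_\nu = k$, so we may inductively subtract $k$-multiples of $f_1$ from the $f_j$ to strictly raise the value, forcing linear dependence after finitely many steps. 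This is the one place the residue-field hypothesis is genuinely required.

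With this identification, computing $\lim_{n\to\infty}\dim_kL_{nm}/n^q$ becomes the problem of counting the $n$-th slices of the graded semigroup $S(L)$ in $\ZZ^{d+1}$, and the purely combinatorial theorem of Kaveh and Khovanskii on the asymptotic growth of such slices (where the hypothesis $\Gamma_\nu \iso \ZZ^d$ is what makes $S(L)$ sit inside a lattice of the right rank) supplies exactly the claimed limit. The main obstacle, such as it is, lies in checking that $q(L)$ as defined in Section \ref{Section2} (via algebraically independent homogeneous elements) really equals one less than the rank of the linear span of $S(L)$; this is a standard consequence of the transcendence-degree formula for valuations with residue field equal to $k$, combined with the identity $\dim_kL_n = \#\phi_n(L_n\setminus\{0\})$ established above.
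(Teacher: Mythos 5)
Your proposal is correct and follows essentially the same route as the paper's proof: embed each $L_n$ into $k(X)\subset$ the valuation data, use the hypotheses $\Gamma_\nu\iso\ZZ^d$ and $V_\nu/m_\nu=k$ to get $\dim_kL_n=|S_n|$ (the paper phrases this via the one-dimensional quotients $I_\gamma/I_\gamma^+$, you via direct linear algebra, but it is the same argument), and then apply Kaveh--Khovanskii's semigroup theorems (their Theorems 2.30 and 1.26) to the graded semigroup $S(L)\subset\ZZ^{d+1}$. The only cosmetic difference is the normalization: the paper takes the center $Q$ of $\nu$ on $X$ (using properness and the residue-field hypothesis) and moves the divisor off $Q$ so that $L\subset V_\nu$ and all values are nonnegative, whereas you divide by powers of an arbitrary rational section; both reduce to the same citation of Kaveh--Khovanskii for the asymptotics.
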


These conditions on the valuation are stated before Definition 2.26 \cite{KK}. The condition that $\nu$ has ``one dimensional leaves'' is defined before Proposition 2.6 \cite{KK}. It is equivalent to the condition that the residue field of the valuation is $k$.
 
\begin{proof} Suppose that such a valuation $\nu$ exists. Let $V_{\nu}$ be the valuation ring of $\nu$ in the function field $k(X)$.
Let $Q$ be the center of $\nu$ on $X$; that is, $\mathcal O_{X,Q}$ is the local ring of $X$ which is dominated by $V_{\nu}$. $Q$ exists and is unique since a projective variety is proper. $Q$ is a $k$-rational point on $X$ since the residue field of $V_{\nu}$ is $k$.

$L$ is a graded linear series for some line bundle $\mathcal L$ on $X$. Let $m=m(L)$ and $q=q(L)$. Since $X$ is integral, $\mathcal L$ is isomorphic to an invertible sheaf
$\mathcal O_X(D)$ for some Cartier divisor $D$ on $X$. 
We can assume that $Q$ is not contained in the support of $D$, after possibly replacing $D$ with a Cartier divisor linearly equivalent to $D$.
We have an induced graded $k$-algebra isomorphism of section rings 
$$
\bigoplus_{n\ge 0}\Gamma(X,\mathcal L^n)\rightarrow \bigoplus_{n\ge 0}\Gamma(X,\mathcal O_X(nD))
$$
which takes $L$ to a graded linear series for $\mathcal O_X(D)$. Thus we may assume that $\mathcal L=\mathcal O_X(D)$.
 For all $n$, the restriction map followed by inclusion into $V_{\nu}$,
\begin{equation}\label{eqR3}
\Gamma(X,\mathcal L^n)\rightarrow \mathcal L_Q=\mathcal O_{X,Q}\subset V_{\nu}
\end{equation}
 is a 1-1 $k$-vector space homomorphism since $X$ is integral, and we have an induced $k$-algebra homomorphism 
$$
 L\rightarrow \mathcal O_{X,Q}\subset V_{\nu}.
 $$

Given a nonnegative element $\gamma$ in the  value group $\Gamma_{\nu}$ of $\nu$, which is isomorphic to $\ZZ^d$ as a group, with some total ordering,  we have associated valuation ideals $I_{\gamma}$ and $I_{\gamma}^+$ in $V_{\nu}$ defined by 
$$
I_{\gamma}=\{f\in V_{\nu}\mid \nu(f)\ge \gamma\}
$$
and
$$
I_{\gamma}^+=\{f\in V_{\nu}\mid \nu(f)>\gamma\}.
$$
Since $V_{\nu}/m_{\nu}=k$,  we have the critical condition that 
\begin{equation}\label{eqR1}
\dim_k I_{\gamma}/I_{\gamma}^+=1
\end{equation}
for all non negative $\gamma \in \Gamma_{\nu}$.
Let
$$
S_n=\{\gamma\in \Gamma_{\nu}\mid \mbox{ there exists } f\in L_n\mbox{ such that }\nu(f)=\gamma\}.
$$
By (\ref{eqR1}) and  (\ref{eqR3}), we that
$$
\dim_k L_n\cap I_{\gamma}/L_n\cap I_{\gamma}^+=\left\{\begin{array}{ll}
1&\mbox{ if there exists $f\in L_n$ with $\nu(f)=\gamma$}\\
0&\mbox{ otherwise.}
\end{array}
\right.
$$
Since every element of $L_n$ has non negative value (as $L_n\subset V_{\nu}$), we have that 
\begin{equation}\label{eqR2}
\dim_kL_n=|S_n|
\end{equation}
for all $n$.
Let 
$$
S(L)=\{(\gamma,n)|\gamma \in S_n\},
$$
a subsemigroup of $\ZZ^{d+1}$.
By Theorem 2.30 \cite{KK}, $S(L)$ is a ``strongly nonnegative semigroup'', so by Theorem 1.26 \cite{KK} and (\ref{eqR2}), we have that
$$
\lim_{n\rightarrow \infty}\frac{\dim_kL_{nm}}{n^q}=\lim_{n\rightarrow \infty}\frac{|S_{mn}|}{n^q}
$$
exists, where $m=m(L)$, and is proportional to the volume of the Newton-Okounkov body $\Delta(S(L))=\pi^{-1}(m)\cap\mbox{Con}(S(L))$, where $\mbox{Con}(S(L))$ is the closure of the  cone generated by $S(L)$ in $\RR^{d+1}$ and $\pi:\RR^{d+1}\rightarrow \RR$ is the projection onto the last factor.

\end{proof}

The condition that there exists a valuation as in the assumptions of Theorem \ref{Theorem53} is always satisfied if $k$ is algebraically closed. It is however a rather special condition over non closed fields, as is shown by the following proposition.

\begin{Proposition}\label{Prop1} Suppose that $X$ is a $d$-dimensional projective variety over a field $k$. Then there exists a valuation $\nu$ of the function field $k(X)$ of $X$ 
such that the value group $\Gamma_{\nu}$ of $\nu$ is isomorphic to $\ZZ^d$  and the residue field $V_{\nu}/m_{\nu}=k$ if and only if 
there exists a birational morphism $X'\rightarrow X$ of projective varieties such that there exists a nonsingular (regular) $k$-rational point $Q'\in X'$.
\end{Proposition}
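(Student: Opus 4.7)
The plan is to prove the two directions separately. The reverse direction ($\Leftarrow$), constructing a valuation from a regular $k$-rational point, is a direct construction; the forward direction ($\Rightarrow$), producing a suitable birational model from a valuation, will require invoking local uniformization.

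For the reverse direction, suppose $X' \to X$ is a birational projective morphism and $Q' \in X'$ is regular and $k$-rational. I would choose a regular system of parameters $y_1, \ldots, y_d$ of the maximal ideal $m_{Q'}$ of $\mathcal{O}_{X', Q'}$. By Cohen's structure theorem the completion $\hat{\mathcal{O}}_{X', Q'}$ is isomorphic to $k[[y_1, \ldots, y_d]]$, a regular local ring and hence an integral domain. On this power series ring I would define the lexicographic valuation $\nu_0$ by sending a nonzero $f = \sum_\alpha a_\alpha y^\alpha$ to the lex-smallest $\alpha$ with $a_\alpha \neq 0$. The identity $\nu_0(fg) = \nu_0(f) + \nu_0(g)$ follows because the only pair $(\alpha, \beta)$ with $\alpha + \beta = \nu_0(f) + \nu_0(g)$, $\alpha \geq \nu_0(f)$, $\beta \geq \nu_0(g)$ is $(\nu_0(f), \nu_0(g))$, so the relevant coefficient of $y^{\nu_0(f) + \nu_0(g)}$ in $fg$ is the nonzero product $a_{\nu_0(f)}b_{\nu_0(g)}$. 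I would extend $\nu_0$ to $\mathrm{Frac}(k[[y_1, \ldots, y_d]])$ and pull it back along the injection $k(X) = k(X') = \mathrm{Frac}(\mathcal{O}_{X', Q'}) \hookrightarrow \mathrm{Frac}(\hat{\mathcal{O}}_{X', Q'})$, which is injective by Krull's intersection theorem. The restricted valuation $\nu$ has $\nu(y_i) = e_i$, so its value group contains and therefore equals $\mathbb{Z}^d$; its residue field contains $k$ and embeds in the residue field $k$ of $\nu_0$, so equals $k$.

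For the forward direction, assume $\nu$ exists with $\Gamma_\nu \cong \mathbb{Z}^d$ and residue field $k$. Since $X$ is projective, the valuative criterion gives a unique center $p$ of $\nu$ on $X$. The chain $k \subseteq \mathcal{O}_{X, p}/m_p \hookrightarrow V_\nu/m_\nu = k$ forces $\kappa(p) = k$, so $p$ is $k$-rational. If $\mathcal{O}_{X, p}$ is regular, I take $X' = X$ and $Q' = p$. Otherwise I would invoke local uniformization: since $\mathrm{rat.rank}(\nu) = d$ and $\mathrm{tr.deg}_k(V_\nu/m_\nu) = 0$ saturate Abhyankar's inequality, $\nu$ is an Abhyankar valuation. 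For Abhyankar valuations (in any characteristic) local uniformization is known, producing a projective birational $X' \to X$ on which the center $Q'$ of $\nu$ is regular; applying the $k$-rationality argument again on $X'$ shows $Q'$ is $k$-rational.

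The main obstacle is the forward direction, specifically the step invoking local uniformization. The reverse direction is a direct construction, and the identification of the center as $k$-rational is formal. The content of the proposition is that valuations of maximal rational rank with trivial residue field extension are precisely those arising from regular $k$-rational points of birational models, and this equivalence relies on the availability of local uniformization for this class of Abhyankar valuations. In characteristic zero Zariski's theorem suffices; in positive characteristic one must appeal to the Abhyankar case, which is known. For this specific setting one can also give a more elementary argument by iterated blowing up of the center, tracking a numerical invariant (for instance the embedding dimension, or the initial segment of the value group realized by a regular system of parameters) that strictly decreases at each step.
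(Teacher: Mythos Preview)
Your proposal is correct and follows essentially the same approach as the paper: the reverse direction via the lexicographic valuation on $\hat{\mathcal O}_{X',Q'}\cong k[[y_1,\ldots,y_d]]$, and the forward direction via local uniformization for Abhyankar valuations (the paper cites Knaf--Kuhlmann for this). Two minor remarks: your residue field argument for $(\Leftarrow)$, via the embedding $V_\nu/m_\nu\hookrightarrow V_{\nu_0}/m_{\nu_0}=k$, is cleaner than the paper's explicit computation; conversely, the paper is slightly more careful in $(\Rightarrow)$, since the cited local uniformization yields only a regular local ring $R$ essentially of finite type dominating $\mathcal O_{X,Q}$, and one must then realize $R$ as a local ring on some projective $X''$ and pass to the graph of $X''\dashrightarrow X$ to obtain an actual birational \emph{morphism} $X'\to X$---a routine step you elided.
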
 

\begin{proof}First suppose  there exists a valuation $\nu$ of the function field $k(X)$ of $X$ 
such that the value group $\Gamma_{\nu}$ of $\nu$ is isomorphic to $\ZZ^d$ as a group and with residue field $V_{\nu}/m_{\nu}=k$. Then $\nu$ is an ``Abhyankar valuation''; that is 
$$
\mbox{trdeg}_kk(X)=d=0+d= \mbox{trdeg}_kV_{\nu}/m_{\nu}+\mbox{rational rank }\Gamma_{\nu},
$$
with $k=V_{\nu}/m_{\nu}$, so there exists a local uniformization of $\nu$ by \cite{KKu}. Let $Q$ be the center of $\nu$ on $X$, so that $V_{\nu}$ dominates $\mathcal O_{X,Q}$. $\mathcal O_{X,Q}$ is a localization of a $k$-algebra $k[Z]$ where $Z\subset V_{\nu}$ is a finite set. By Theorem 1.1 \cite{KKu}, there exists a regular local ring $R$ which is essentially of finite type over $k$ with quotient field $k(X)$ such that $V_{\nu}$ dominates $R$ and $Z\subset R$. Since $k[Z]\subset R$ and $V_{\nu}$ dominates $\mathcal O_{X,Q}$, we have that $R$ dominates $\mathcal O_{X,Q}$. The residue field $R/m_R=k$ since $V_{\nu}$ dominates $R$.
There exists a projective $k$-variety $X''$ such that
$R$ is the local ring of a closed $k$-rational point $Q'$ on $X''$, and the birational map $X''\dashrightarrow X$ is a morphism in a neighborhood of $Q'$. Let $X'$ be the graph of the birational correspondence between $X''$ and $X$.
Since $X''\dashrightarrow X$ is a morphism in a neighborhood of $Q'$, the projection of $X'$ onto $X''$ is an isomorphism in a neighborhood of $Q'$. We can thus
identify $Q'$ with a nonsingular $k$-rational point of $X'$.

Now suppose that there exists a birational morphism $X'\rightarrow X$ of projective varieties such that there exists a nonsingular  $k$-rational point $Q'\in X'$. 

Choose a regular system of parameters $y_1,\ldots, y_d$ in $R=\mathcal O_{X',Q'}$. $R/m_R=k(Q')=k$, so $k$ is  a coefficient field of $R$.
 We have that $\hat R=k[[y_1,\ldots,y_d]]$. We define a valuation $\hat \nu$ dominating $\hat R$ by stipulating that
 \begin{equation}\label{eq20}
\hat\nu(y_i)= e_i\mbox{ for $1\le i\le d$}
\end{equation}
where $\{e_i\}$ is the standard basis of the totally ordered  group $(\ZZ^d)_{\rm lex}$, and
 $\hat\nu(c)=0$ if $c$ is a nonzero element of $k$.
 
If $f\in \hat R$ and $f=\sum c_{i_1,\ldots,i_d}y_1^{i_1}\cdots y_d^{i_d}$ with $c_{i_1,\ldots,i_d}\in k$, then 
$$
\hat \nu(f)=\min\{\nu(y_1^{i_1}\cdots y_d^{i_d})\mid c_{i_1,\ldots,i_d}\ne0\}.
$$
We let $\nu$ be the valuation of the function field $k(X)$ which is obtained by restricting $\nu$. The value group of $\nu$ is $(\ZZ^d)_{\rm lex}$.

Suppose that $h$ is in $k(X)$ and $\nu(h)=0$. Write $h=\frac{f}{g}$ where $f,g\in R$ and $\nu(f)=\nu(g)$. Thus in $\hat R$,
we have expansions $f=\alpha y_1^{i_1}\cdots y_d^{i_d} +f'$, $g=\beta y_1^{i_1}\cdots y_d^{i_d} +g'$ where $\alpha,\beta$ are nonzero elements of $k$, $\nu(y_1^{i_1}\cdots y_d^{i_d})=\nu(f)=\nu(g)$
and $\nu(f')>\nu(f)$, $\nu(g')>\nu(g)$. Let  $\gamma=\frac{\alpha}{\beta}$ in $k$. Computing $f-\gamma g$ in $\hat R$, we obtain that $\nu(f-\gamma g)>\nu(f)$, and thus the residue of $\frac{f}{g}$ in $V_{\nu}/m_{\nu}$ is equal to the residue of $\gamma$, which is in $k$. By our construction $k\subset V_{\nu}$. Thus the residue field $V_{\nu}/m_{\nu}=k$.
\end{proof}

\begin{Corollary}\label{Corollary1} Suppose that $X$ is a projective variety over a field $k$ which has  a nonsingular $k$-rational point. Then the conclusions of Theorem 
\ref{Theorem10} hold for any graded linear series $L$ on $X$.
\end{Corollary}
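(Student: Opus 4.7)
The plan is to directly combine Proposition \ref{Prop1} with Theorem \ref{Theorem53}. Since $X$ is itself a projective variety over $k$ with a nonsingular $k$-rational point $Q$, I can take $X' = X$ and the identity morphism $X' \to X$ in the ``if'' direction of Proposition \ref{Prop1}. This produces a valuation $\nu$ of $k(X)$ whose value group is isomorphic to $\ZZ^d$ and whose residue field $V_\nu/m_\nu$ equals $k$. The construction in the proof of Proposition \ref{Prop1} realizes such a $\nu$ explicitly: one chooses a regular system of parameters $y_1,\ldots,y_d$ at $Q$ in $\hat R = k[[y_1,\ldots,y_d]]$, assigns them the standard basis of $(\ZZ^d)_{\rm lex}$ as values, and restricts the resulting valuation on $\hat R$ back to $k(X)$.

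With this $\nu$ available on $X$, Theorem \ref{Theorem53} applies directly to any graded linear series $L$ on $X$ with $q(L) \geq 0$, yielding the conclusion of Theorem \ref{Theorem10}. The only remaining case is $q(L) = -\infty$, but on a projective variety (hence integral and reduced) this forces $L_n = 0$ for all $n > 0$ by Lemma \ref{LemmaKI}(4), so the statement is vacuous. There is no real obstacle to this corollary: it is an immediate assembly of Proposition \ref{Prop1} and Theorem \ref{Theorem53}, with the substantive content absorbed respectively by the local uniformization theorem of \cite{KKu} used in Proposition \ref{Prop1} and by the Newton-Okounkov body argument of Kaveh and Khovanskii underlying Theorem \ref{Theorem53}.
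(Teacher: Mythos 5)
Your proposal is correct and is essentially the paper's own argument: the paper proves this corollary by stating that it is immediate from Theorem \ref{Theorem53} and Proposition \ref{Prop1}, exactly the assembly you describe (taking $X'=X$ in the ``if'' direction of Proposition \ref{Prop1} to produce the required $\ZZ^d$-valued valuation with residue field $k$). Your extra remark disposing of the $q(L)=-\infty$ case via Lemma \ref{LemmaKI}(4) is harmless but not needed, since the conclusions of Theorem \ref{Theorem10} are only asserted when $q(L)\ge 0$.
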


\begin{proof} This is immediate from Theorem \ref{Theorem53} and  Proposition \ref{Prop1}.
\end{proof}

We obtain  the following extension of  Theorem \ref{Theorem10}.

\begin{Theorem}\label{Theorem13} Suppose that $X$ is a projective variety over a perfect field $k$. 

Let $L$ be a graded linear series on $X$. Let $m=m(L)$ be the index of $L$ and $q=q(L)\ge 0$ be the Kodaira-Iitaka dimension of $L$.  Then  
$$
\lim_{n\rightarrow \infty}\frac{\dim_k L_{nm}}{n^q}
$$
exists. 
\end{Theorem}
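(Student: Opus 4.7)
The plan is a two-step reduction to Corollary \ref{Corollary1}. In the first step, handle the geometrically irreducible case: if $X$ is geometrically irreducible over $k$, pick a closed smooth point $P \in X$ (which exists by generic smoothness since $k$ is perfect) and let $K := k(P)$, a finite separable extension of $k$. The base change $X_K := X \times_k \mathrm{Spec}(K)$ is then itself irreducible (a projective variety over $K$ of dimension $d$) with a smooth $K$-rational point $\tilde P$ lifting $P$. By Corollary \ref{Corollary1}, Theorem \ref{Theorem10} applies to any graded linear series on $X_K$; in particular to $L_K := L \otimes_k K$, for which $\dim_K (L_K)_n = \dim_k L_n$, $m(L_K) = m(L)$, and $q(L_K) = q(L)$ (the last because algebraic independence of positive-degree homogeneous elements transfers faithfully under the algebraic extension $K/k$, using the additivity of transcendence degrees).

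In the second step, reduce the general case to the geometrically irreducible case. Let $K_0 := \bar k \cap k(X)$, the algebraic closure of $k$ in $k(X)$; this is a finite separable extension of $k$ (by perfectness of $k$). To exhibit a $K_0$-scheme structure, replace $X$ by its normalization $\tilde X$, a normal $K_0$-variety with $\Gamma(\tilde X, \mathcal O_{\tilde X}) = K_0$ and which is geometrically irreducible over $K_0$. Pull back $L$ to $\tilde X$ via the birational morphism $\pi: \tilde X \to X$, and form the graded $K_0$-subalgebra $\tilde L := K_0 \cdot \pi^* L$, a graded linear series on $\tilde X/K_0$. By the first step applied to $\tilde X/K_0$, the limit $\lim_n \dim_{K_0} \tilde L_{n m(\tilde L)}/n^{q(\tilde L)}$ exists.

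Finally, establish the asymptotic relation $\dim_k L_n = [K_0 : k]\,\dim_{K_0} \tilde L_n + o(n^{q(L)})$, which converts existence of the limit for $\tilde L/K_0$ into existence for $L/k$. The relation stems from the containments $L_n \hookrightarrow \tilde L_n$ and $\tilde L_n = K_0 \cdot \pi^*(L_n)$, combined with Galois descent via the $\mathrm{Gal}(K_0/k)$-action on $\tilde L_n$ (which takes $L_n$ essentially to the Galois-invariant part up to "overlap" corrections). The main obstacle will be controlling those correction terms: both the discrepancy caused by the non-normal locus of $X$ (which makes $\Gamma(X, \mathcal L^n) \subsetneq \Gamma(\tilde X, \pi^*\mathcal L^n)$ in general) and the Galois-descent defect contribute. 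Each contribution lives on a subscheme of dimension $<d$, so is a priori $O(n^{d-1})$, but rigorously showing it is $o(n^{q(L)})$ when $q(L)<d$ is the technical heart of the proof and likely requires replacing $X$ by a subvariety on which $L$ attains maximal Kodaira-Iitaka dimension.
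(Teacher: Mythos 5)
Your Step 1 (the geometrically irreducible case) is fine and is essentially the paper's argument in the easy case, but the reduction in Steps 2--3 has a genuine gap: the asymptotic relation $\dim_k L_n=[K_0:k]\,\dim_{K_0}\tilde L_n+o(n^{q(L)})$ on which the whole reduction rests is false. Take $K_0/k$ any quadratic separable extension, $X=\PP^1_{K_0}$ regarded as a projective variety over $k$, $\mathcal L=\mathcal O(1)$, and let $L_n\subset\Gamma(X,\mathcal O(n))$ be the $k$-span of the monomials $x^iy^{n-i}$. Then $L$ is a graded linear series over $k$ with $q(L)=1=m(L)^{-1}\cdot 1$, $\dim_kL_n=n+1$, while $\tilde X=X$ (it is already normal, viewed as a $K_0$-variety), $\tilde L_n=K_0\cdot L_n=\Gamma(\PP^1_{K_0},\mathcal O(n))$ and $[K_0:k]\dim_{K_0}\tilde L_n=2(n+1)$; the discrepancy is of order $n^{q(L)}$, not $o(n^{q(L)})$. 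In general $\dim_kL_n$ can sit anywhere between $\dim_{K_0}\tilde L_n$ and $[K_0:k]\dim_{K_0}\tilde L_n$, and passing to the $K_0$-span destroys exactly the dimension data you need, so existence of the limit for $\tilde L$ over $K_0$ does not yield existence of the limit for $L$ over $k$. The proposed Galois-descent repair does not work either: $\mathrm{Gal}(K_0/k)$ has no natural action on the $K_0$-scheme $\tilde X$ (or on $\tilde L_n$) compatible with $\pi$, since $\tilde X$ need not descend to $k$, so there is no "invariant part'' to compare $L_n$ with. You also flag yourself that the error-term control is not done; but as the example shows, the problem is not technical, the formula itself is wrong, and the strategy of normalizing and enlarging to a $K_0$-linear series cannot be salvaged by dimension counts on lower-dimensional loci.

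The paper's proof avoids any such comparison. One takes a closed point $Q$ in the regular locus of $X$, lets $k'$ be a Galois closure of $k(Q)/k$, and base changes $L$ itself: $L'=L\otimes_kk'$, so that $\dim_{k'}L'_n=\dim_kL_n$ exactly, with no error term. The scheme $X'=X\times_kk'$ is reduced (but possibly reducible), the points above $Q$ are regular $k'$-rational points, and the Galois group permutes the irreducible components $X_1,\ldots,X_s$ transitively; this transitivity is precisely what shows that the composite $L_n\otimes_kk'\to\Gamma(X_1,\mathcal L^n\otimes\mathcal O_{X_1})$ is injective, so $L'$ becomes a graded linear series on the single $k'$-variety $X_1$, which carries a regular $k'$-rational point, and Corollary \ref{Corollary1} applies. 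That is the idea missing from your outline: rather than transporting $L$ to a different linear series on a different model and trying to compare dimensions, tensor $L$ with a finite Galois extension (preserving all dimensions) and embed it, via the transitive Galois action on components, into the section ring of one component of the base change.
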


\begin{proof}  
Let $Q$ be a closed regular  point in $X$. Let $R=\mathcal O_{X,Q}$.
 Let   $k'$ be a Galois closure of the residue field $k(Q)$ of $R$ over $k$.
$k'$ is finite  separable over $k$, so that $X'=X\times_kk'$ is  reduced   as $\mathcal O_X\otimes_kk'$ is a subsheaf of rings of $k(X)\otimes_kk'$, which is reduced by Theorem 39, Section 15, Chapter III \cite{ZS1}.

 Let $S=R\otimes_kk'$. Then $S$ is a reduced  semi local ring by Theorem 39 \cite{ZS1}.  Let $p_1,\ldots, p_r$ be the maximal ideals of $S$. 
 $S/m_RS\cong (R/m_R)\otimes_kk'$ is  reduced by Theorem 39, \cite{ZS1}. Thus
$m_RS=p_1\cap\cdots \cap p_r$.
 Since $R$ is a regular local ring, $m_R$ is generated by $d=\dim R$ elements. For $1\le i\le r$, we thus have that $p_iS_{p_i}=m_RS_{p_i}$ is generated by $d=\dim R=\dim S_{p_i}$ elements. Thus $S_{p_i}$ is a regular local ring for all $i$, so $S$ is a regular ring.
 
  $k'=k[\alpha]$ for some $\alpha\in k'$ since $k'$ is a finite separable extension of $k$.
  Let $f(x)\in k[x]$ be the minimal polynomial of $\alpha$.  $k'$ is a separable normal extension of $k$ containing $\alpha$, so $f(x)$ splits into distinct linear factors in $k'[x]$. Then
$$
\bigoplus_{i=1}^r S/p_i\cong S/m_RS\cong k'\otimes_kk'\cong k'[x]/(f(x))\cong (k')^r.
$$
Thus  $S/p_i\cong k'$ for all $i$. 
Let $Q'_i$ be the corresponding closed point to $p_i$ in $X'$, which has the local ring $\mathcal O_{X',Q'_i}=S_{p_i}$, so that
$Q'_i$ is a regular, $k'$-rational point on the variety $X'$ for all $i$.

Let $X_1,\ldots, X_s$ be the distinct irreducible components of $X'$.
Since $X'$ is reduced, we have a natural inclusion
$$
0\rightarrow \mathcal O_{X'}\rightarrow \bigoplus_{i=1}^s\mathcal O_{X_i}
$$
which induces inclusions 
\begin{equation}\label{eq30}
\Gamma(X',\mathcal L^n\otimes_{\mathcal O_X}\mathcal O_{X'})\rightarrow \bigoplus_{i=1}^s\Gamma(X_i,\mathcal L^n\otimes_{\mathcal O_X}\mathcal O_{X_i})
\end{equation}
for all $n$.

The elements of the Galois group $G$ of $k'$ over $k$ induce $X$-automorphisms of $X'$ which act transitively on the components $X_i$. $G$ acts naturally on $\mathcal L^n\otimes_{\mathcal O_X}\mathcal O_{X'}$. Thus for $\sigma\in G$, we have a commutative diagram
$$
\begin{array}{ccccc}
L_n\otimes_kk'&\subset&\Gamma(X', \mathcal L^n\otimes_{\mathcal O_X}\mathcal O_{X'})&\rightarrow &\Gamma(X_1, \mathcal L^n\otimes_{\mathcal O_{X}}\mathcal O_{X_1})\\
\downarrow \mbox{id}&&\downarrow\sigma&&\downarrow\sigma\\
L_n\otimes_kk'&\subset&\Gamma(X', \mathcal L^n\otimes_{\mathcal O_X}\mathcal O_{X'})&\rightarrow &\Gamma(\sigma(X_1), \mathcal L^n\otimes_{\mathcal O_{X}}\mathcal O_{\sigma(X_1)}).
\end{array}
$$
Suppose that $h\in L_n\otimes_kk'$ maps to zero in  $\Gamma(X_1, \mathcal L^n\otimes_{\mathcal O_X}\mathcal O_{X_1})$.
Since $G$ acts transitively on the components of $X'$,  $h$ maps to zero in $\Gamma(X_i, \mathcal L^n\otimes_{\mathcal O_X}\mathcal O_{X_i})$ for all $i$. From the inclusion (\ref{eq30}), we conclude that $h=0$.  Thus we have  inclusions
$$
L_n\otimes_kk'\rightarrow \Gamma(X,\mathcal L^n\otimes_{\mathcal O_X}\mathcal O_{X_1})
$$
for all $n$.  

Let $L'=\bigoplus_{n\ge 0}L_n\otimes_kk'$. $L'$ is a graded linear series for the line bundle $\mathcal L\otimes_{\mathcal O_X}\mathcal O_{X_1}$ on the $k'$-variety $X_1$. We have that $m(L')=m(L)$ and $q(L')=q(L)$.

Since $Q_i\in X_1$ for some $i$,  we have that $X_1$ contains a non singular $k'$-rational point.
 By Corollary \ref{Corollary1}, the limit
$$
\lim_{n\rightarrow \infty} \frac{\dim_{k'} L'_{nm}}{n^q}
$$
thus exists.

 Now the theorem follows from the  formula 
$$
\dim_kL_n=\dim_{k'}L_n\otimes_kk'.
$$
\end{proof}

It follows from the theorem that (\ref{eq61}) holds for a graded linear series on a  projective variety over a perfect field $k$.

We obtain that  Corollary \ref{Theorem12} holds on reduced projective schemes over perfect fields.

\begin{Corollary}\label{Theorem14} Suppose that $X$ is a reduced projective scheme of dimension $d$ over a perfect field $k$, and $\mathcal L$ is a  line bundle on $X$.  Then  the limit
$$
\lim_{n\rightarrow \infty}\frac{\dim_k \Gamma(X,\mathcal L^n)}{n^d}
$$
exists. 
\end{Corollary}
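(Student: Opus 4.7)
The plan is to decompose $X$ into its irreducible components, control the error from their overlaps, and handle each component via Theorem \ref{Theorem13}, upgrading its arithmetic-progression limit to a genuine limit in the one case where this is not automatic.

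Let $X_1,\ldots,X_s$ be the distinct irreducible components of $X$, each a projective variety over $k$ of some dimension $d_i\le d$. Reducedness of $X$ gives an injection $\mathcal O_X\hookrightarrow\bigoplus_{i=1}^s\mathcal O_{X_i}$ whose cokernel $\mathcal Q$ is supported on $\bigcup_{i\ne j}X_i\cap X_j$, a closed subscheme of dimension at most $d-1$. Tensoring with the locally free sheaf $\mathcal L^n$ and taking global sections yields
\begin{equation*}
\biggl|\dim_k\Gamma(X,\mathcal L^n)-\sum_{i=1}^s\dim_k\Gamma(X_i,\mathcal L^n|_{X_i})\biggr|\le\dim_k\Gamma(X,\mathcal Q\otimes\mathcal L^n).
\end{equation*}
A d\'evissage of $\mathcal Q$ into quotients of the form $\mathcal O_Z$, with $Z\subset X$ an integral closed subscheme of dimension at most $d-1$, reduces the right-hand side to a sum of terms $\dim_k\Gamma(Z,\mathcal L^n|_Z)$, each bounded by $O(n^{d-1})$ via Lemma \ref{LemmaKI}(2). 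After dividing by $n^d$ it therefore suffices to show that $\lim_{n\to\infty}\dim_k\Gamma(X_i,\mathcal L^n|_{X_i})/n^d$ exists for each component $X_i$.

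Fix such an $X_i$, write $\mathcal L_i:=\mathcal L|_{X_i}$, and let $L^{(i)}$ be its section ring, with index $m_i$ and Kodaira-Iitaka dimension $q_i$. If $d_i<d$, Lemma \ref{LemmaKI}(2) gives $\dim_k L^{(i)}_n=O(n^{d_i})=o(n^d)$. Assume now $d_i=d$. If $q_i<d$, then by Theorem \ref{Theorem13} applied to $X_i$, the sequence $\dim_k L^{(i)}_{nm_i}/n^{q_i}$ converges, whence $\dim_k L^{(i)}_n=O(n^{q_i})$ on $m_i\ZZ$ and vanishes off it, forcing $\dim_k L^{(i)}_n=o(n^d)$ (the case $q_i=-\infty$ is immediate from Lemma \ref{LemmaKI}(4)). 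In each of these situations the limit is $0$. The essential remaining case is $d_i=d$ and $q_i=d$, i.e., $\mathcal L_i$ big on the variety $X_i$.

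In that case Theorem \ref{Theorem13} alone is insufficient: it only delivers a limit along $m_i\ZZ$, and since $\dim_k L^{(i)}_n$ vanishes off this progression, a nonzero limit along it would destroy existence of the limit over all $n$ unless $m_i=1$. I close this gap via Kodaira's lemma. Combining the lower bound $\dim_k L^{(i)}_{en}>\alpha n^d$ from Lemma \ref{LemmaKI}(3) with the $O(n^{d-1})$ bound on $\dim_k\Gamma(A,\mathcal L_i^n|_A)$ from Lemma \ref{LemmaKI}(2), applied to an effective ample Cartier divisor $A$ on $X_i$, the exact sequence
\begin{equation*}
0\to\mathcal L_i^n\otimes\mathcal O_{X_i}(-A)\to\mathcal L_i^n\to\mathcal L_i^n|_A\to 0
\end{equation*}
yields $\Gamma(X_i,\mathcal L_i^a\otimes\mathcal O_{X_i}(-A))\ne 0$ for some $a>0$, hence an isomorphism $\mathcal L_i^a\cong\mathcal O_{X_i}(A+E)$ with $E$ an effective Cartier divisor. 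Writing $n=ab+c$ with $0\le c<a$, multiplication by the canonical section of $\mathcal O_{X_i}(bE)$ embeds $\Gamma(X_i,\mathcal O_{X_i}(bA)\otimes\mathcal L_i^c)$ into $\Gamma(X_i,\mathcal L_i^n)$, and Serre vanishing for the ample $A$ makes the former nonzero for all $b$ large, uniformly in the finitely many residues $c$. Thus $\Gamma(X_i,\mathcal L_i^n)\ne 0$ for every $n\gg 0$, so $m_i=1$, and Theorem \ref{Theorem13} yields the required limit. The main obstacle is precisely this last step: upgrading an arithmetic-progression limit to a genuine limit for big line bundles on the component $X_i$.
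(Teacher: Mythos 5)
Your proof is correct and takes essentially the same route as the paper: reduce to the irreducible components with an $O(n^{d-1})$ error coming from the cokernel of $\mathcal O_X\rightarrow\bigoplus_i\mathcal O_{X_i}$, observe the components with $q<d$ contribute limit $0$, and in the big case use a Kodaira-lemma type argument with an ample divisor (the paper's hyperplane section $H$) to show the index is $1$, then invoke Theorem \ref{Theorem13}. The only cosmetic difference is how $m=1$ is extracted: the paper gets nonvanishing in the two consecutive degrees $nef$ and $nef+1$ via global generation of $\mathcal L\otimes\mathcal O_X(fH)$, while you get nonvanishing in all large degrees from $\mathcal L_i^a\cong\mathcal O_{X_i}(A+E)$ and Serre's theorem.
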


\begin{proof} 
We first prove the corollary in the case when $X$ is integral (a variety). 
We may assume that the section ring $L$ of $\mathcal L$ has maximal Kodaira-Iitaka dimension $d$, because the limit is zero otherwise.
There then exists a positive constant $\alpha$ and a positive integer $e$ such that
$$
\dim_k\Gamma(X,\mathcal L^{ne})>\alpha n^d
$$
for all positive integers $n$ by (\ref{eqKI2}). Let $H$ be a hyperplane section of $X$, giving a short exact sequence
$$
0\rightarrow \mathcal O_X(-H)\rightarrow \mathcal O_X\rightarrow \mathcal O_H\rightarrow 0.
$$
Tensoring with $\mathcal L^n$ and taking global sections, we see that $\Gamma(X,\mathcal L^{ne}\otimes \mathcal O_X(-H))\ne 0$ for $n\gg 0$ as
$q(\mathcal L^e\otimes \mathcal O_H)\le \dim(H)=d-1$. Since $H$ is ample, there exists a positive integer $f$ such that $\mathcal L\otimes\mathcal O_X(fH)$
is generated by global sections. Thus
$$
\Gamma(X,\mathcal L^{nef+1})\cong \Gamma(X,(\mathcal L^{nef}\otimes \mathcal O_X(-fH))\otimes (\mathcal L\otimes \mathcal O_X(fH)))\ne 0
$$
for $n\gg 0$. Thus $m(L)=1$.
The corollary  in the case when $X$ is a variety thus follows from Theorem \ref{Theorem13}.

Now assume that $X$ is only reduced. Let $X_1,\ldots, X_s$ be the irreducible components of $X$. Since $X$ is reduced, we have a natural short exact sequence of
$\mathcal O_X$-modules
$$
0\rightarrow \mathcal O_X\rightarrow \bigoplus_{n\ge 0}\mathcal O_{X_i}\rightarrow \mathcal F\rightarrow 0
$$
where $\mathcal F$ has support of dimension $\le d-1$. Tensoring with $\mathcal L^n$, we obtain that
$$
\lim_{n\rightarrow \infty}\frac{\dim_k\Gamma(X,\mathcal L^n)}{n^d}=\sum_{i=1}^s\lim_{n\rightarrow \infty}\frac{\dim_k\Gamma(X_i, \mathcal L^n\otimes\mathcal O_{X_i})}{n^d}
$$
exists, as $\dim_k\Gamma(X,\mathcal F\otimes \mathcal L^n)$ grows at most like $n^{d-1}$.
\end{proof}

\section{Limits on Reduced Schemes}

Suppose that $X$ is a projective scheme over a field $k$ and $L$ is a graded linear series for a linebundle $\mathcal L$ on $X$.
Suppose that $Y$ is a closed subscheme of $X$. Set $\mathcal L|Y=\mathcal L\otimes_{\mathcal O_X}\mathcal O_Y$. Taking global sections of the natural surjections
$$
\mathcal L^n\stackrel{\phi_n}{\rightarrow} (\mathcal L|Y)^n\rightarrow 0,
$$
for $n\ge 1$ we have  induced short exact sequences of $k$-vector spaces
\begin{equation}\label{eq54}
0\rightarrow K(L,Y)_n\rightarrow L_n\rightarrow (L|Y)_n\rightarrow 0,
\end{equation}
where 
$$
(L|Y)_n:=\phi_n(L_n)\subset \Gamma(Y,({\mathcal L}|Y)^n)
$$
 and $K(L ,Y)_n$ is the kernel of $\phi_n|L_n$. Defining $K(L,U)_0=k$ and $(L|Y)+0=\phi_0(L_0)$, we have that
$L|Y=\bigoplus_{n\ge 0}(L|Y)_n$ is a graded linear series for $\mathcal L|Y$ and $K(L,Y)=\bigoplus_{n\ge 0}K(L,Y)_n$ is a graded linear series for $\mathcal L$.

\begin{Lemma}\label{Lemma50} Suppose that $X$ is a reduced projective scheme and $X_1,\ldots,X_s$ are the irreducible components of $X$. Suppose that $L$ is a graded linear series on $X$. Then 
$$
q(L)=\max\{q(L|X_i)\mid 1\le i\le s\}.
$$
\end{Lemma}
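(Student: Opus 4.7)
The plan is to establish the two inequalities $q(L) \ge q(L|X_i)$ for each $i$ and $q(L) \le \max_i q(L|X_i)$ separately, by translating statements about Kodaira-Iitaka dimensions into statements about kernels of evaluation maps from polynomial rings. The input will be the fact that since $X$ is reduced, the natural map
$$
\mathcal O_X \hookrightarrow \bigoplus_{i=1}^s \mathcal O_{X_i}
$$
is injective (as its kernel, locally the intersection of the minimal primes, is the nilradical, which vanishes). Tensoring with the locally free sheaf $\mathcal L^n$ and taking global sections produces, for every $n$, an injection
$$
L_n \subset \Gamma(X,\mathcal L^n) \hookrightarrow \bigoplus_{i=1}^s \Gamma(X_i,(\mathcal L|X_i)^n),
$$
whose $i$-th component factors through $(L|X_i)_n$. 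This assembles into an injective graded $k$-algebra homomorphism $L \hookrightarrow \bigoplus_i L|X_i$.

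For the easy direction $\sigma(L) \ge \sigma(L|X_i)$: given homogeneous $y_1,\dots,y_m \in L|X_i$ of positive degree, algebraically independent over $k$, pick preimages $\tilde y_j \in L_{\deg y_j}$. These are homogeneous of positive degree, and any polynomial relation $f(\tilde y_1,\dots,\tilde y_m)=0$ in $L$ would restrict to $f(y_1,\dots,y_m)=0$ in $L|X_i$, contradicting independence.

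For the reverse direction $\sigma(L) \le \max_i \sigma(L|X_i)$: let $y_1,\dots,y_m \in L$ be homogeneous of positive degree and algebraically independent over $k$, so the induced map $\varphi\colon k[Y_1,\dots,Y_m] \to L$ is injective. For each $i$, let $\varphi_i\colon k[Y_1,\dots,Y_m] \to L|X_i$ be the composition with the restriction map, and set $\mathfrak q_i = \ker \varphi_i$. Each $\mathfrak q_i$ is prime since $L|X_i$ embeds in the section ring of a line bundle on the integral scheme $X_i$, which is a domain. The diagonal $k[Y_1,\dots,Y_m] \to \bigoplus_i L|X_i$ is injective because it factors as the composition of the injective $\varphi$ with the injection $L\hookrightarrow \bigoplus_i L|X_i$ established above, so $\bigcap_i \mathfrak q_i = (0)$.

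The main obstacle, and the one key step, is to conclude from $\bigcap_i \mathfrak q_i = (0)$ that some individual $\mathfrak q_i$ is zero. This is the elementary pigeonhole fact that a finite intersection of prime ideals equaling a prime ideal forces equality with one of them: if each $\mathfrak q_i$ contained a nonzero element $a_i$, then $a_1\cdots a_s$ would be a nonzero element (since $k[Y_1,\dots,Y_m]$ is a domain) lying in $\bigcap_i \mathfrak q_i = (0)$. Thus $\mathfrak q_i = (0)$ for some $i$, meaning the images of $y_1,\dots,y_m$ in $L|X_i$ are algebraically independent homogeneous elements of positive degree, so $\sigma(L|X_i) \ge m$. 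Taking $m=\sigma(L)$ yields $\max_i\sigma(L|X_i)\ge \sigma(L)$, and combining both inequalities with the definition of $q$ gives the claim.
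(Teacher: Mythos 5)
Your proof is correct and takes essentially the same route as the paper: reducedness gives the injection $L\hookrightarrow\bigoplus_i L|X_i$, one inequality comes from lifting along the surjections $L_n\to (L|X_i)_n$, and the other from observing that the kernels $\mathfrak q_i$ of the induced maps from the polynomial ring intersect in $(0)$, forcing some $\mathfrak q_i=(0)$ since the polynomial ring is a domain. The only (harmless) difference is your remark that the $\mathfrak q_i$ are prime, which neither your product-of-nonzero-elements argument nor the paper's actually needs.
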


\begin{proof} $L$ is a graded linear series  for a line bundle $\mathcal L$ on $X$.
Let $X_1,\ldots,X_s$ be the irreducible components of $X$. Since $X$ is reduced, 
we have a natural inclusion
$$
0\rightarrow \mathcal O_X\rightarrow \bigoplus_{i=1}^s \mathcal O_{X_i}.
$$
There is a  natural inclusion of $k$-algebras
$$
\bigoplus_{n\ge 0}\Gamma(X,\mathcal L^n)\rightarrow  \bigoplus_{i=1}^s\left(\bigoplus_{n\ge 0}\Gamma(X_i,\mathcal L^n\otimes_{\mathcal O_X}\mathcal O_{X_i})\right),
$$ 
which induces an inclusion of $k$-algebras
\begin{equation}\label{eq42}
L\rightarrow \bigoplus_{i=1}^sL|X_i.
\end{equation}
Suppose that $i$ is such that $1\le i\le s$. Set $t=q(L|X_i)$. Then by the definition of Kodaira-Iitaka dimension, there exists a graded inclusion of $k$-algebras $\phi:k[z_1,\ldots,z_t]\rightarrow L|X_i$ where
$k[z_1,\ldots,z_t]$ is a graded polynomial ring. Since the projection $L\rightarrow L|X_i$ is a surjection, we have a lift of $\phi$ to a graded $k$-algebra homomorphism into $L$, which is 1-1, so that $q(L)\ge t$. Thus
$$
q(L)\ge \max\{q(L|X_i)\mid 1\le i\le s\}.
$$
Let $q=q(L)$. Then there exists a 1-1 $k$-algebra homomorphism $\phi:k[z_1,\ldots,z_q]\rightarrow L$ where
$k[z_1,\ldots,z_q]$ is a positively graded polynomial ring. Let $\phi_i:k[z_1,\ldots,z_q]\rightarrow L|X_i$ be the induced homomorphisms, for $1\le i\le s$.
Let $\mathfrak p_i$ be the kernel of $\phi_i$. Since (\ref{eq42}) is 1-1, we have that $\mathfrak p_1\cap \cdots \cap \mathfrak p_s=(0)$.
Since $k[z_1,\ldots,z_q]$ is a domain, this implies that some $\mathfrak p_i=(0)$. Thus $\phi_i$ is 1-1 and we have that $q(L|X_i)\ge q(L)$.

\end{proof}

\begin{Theorem}\label{Theorem18} Suppose that $X$ is a reduced projective scheme over a perfect field $k$. 
Let $L$ be a graded linear series on $X$. Let $q=q(L)\ge 0$ be the Kodaira-Iitaka dimension of $L$.  
 Then there exists a positive integer $r$ such that 
$$
\lim_{n\rightarrow \infty}\frac{\dim_k L_{a+nr}}{n^q}
$$
exists for any fixed $a\in \NN$.
\end{Theorem}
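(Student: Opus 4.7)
The plan is to induct on the number $s$ of irreducible components of $X$. The base case $s = 1$, where $X$ is a variety, is immediate from Theorem \ref{Theorem13}: setting $r = m(L)$, for any $a \in \NN$, either $m(L)\mid a$ (so that $L_{a+nr}$ is a reindexing of the sequence in Theorem \ref{Theorem13} and the limit exists after rescaling by a power of $r/m(L)$) or $m(L)\nmid a$ (so that $L_{a+nr} = 0$ by the definition of the index, and the limit is zero).

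For the inductive step $s \geq 2$, set $Y = X_1$ and let $Z = X_2 \cup \cdots \cup X_s$ be the reduced closed subscheme given by the union of the remaining components. The short exact sequence (\ref{eq54}) yields
\begin{equation*}
\dim_k L_{a+nr} = \dim_k (L|Y)_{a+nr} + \dim_k K(L,Y)_{a+nr}.
\end{equation*}
The restriction $L|Y$ is a graded linear series on the variety $Y$, so the base case produces a period $r_1$ working for $L|Y$. For the kernel $K(L,Y)$: since $X$ is reduced and $X = Y \cup Z$ as schemes, the natural map $\Gamma(X,\mathcal L^n) \to \Gamma(Y,\mathcal L^n|_Y) \oplus \Gamma(Z,\mathcal L^n|_Z)$ is injective (it arises from the injection $\mathcal O_X \hookrightarrow \mathcal O_Y \oplus \mathcal O_Z$ tensored with the locally free $\mathcal L^n$), and this forces the restriction $K(L,Y)_n \to \Gamma(Z,\mathcal L^n|_Z)$ to be injective as well. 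The image realizes $K(L,Y)$ as a graded linear series on the reduced scheme $Z$, which has only $s-1$ irreducible components, so the inductive hypothesis supplies a period $r_2$ working for $K(L,Y)$.

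Take $r = \mathrm{lcm}(r_1, r_2)$. The routine observation that if $\lim_{n\to\infty}\dim_k M_{a+nr_0}/n^{q_0}$ exists and $r_0 \mid r$, then $\lim_{n\to\infty}\dim_k M_{a+nr}/n^{q_0}$ also exists (by rescaling by $(r/r_0)^{q_0}$) implies that both $\dim_k (L|Y)_{a+nr}/n^{q_1}$ and $\dim_k K(L,Y)_{a+nr}/n^{q'}$ converge, where $q_1 = q(L|Y)$ and $q' = q(K(L,Y))$. By Lemma \ref{Lemma50} and the inclusion $K(L,Y) \subseteq L$, both $q_1$ and $q'$ are bounded by $q$; dividing by $n^q$ then either preserves the limit (when equality holds) or sends the corresponding term to zero (when the inequality is strict). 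The degenerate subcases $q_1 = -\infty$ or $q' = -\infty$ are handled by Lemma \ref{LemmaKI}(4), which forces the relevant graded component to vanish in positive degree. The main obstacle is thus the injectivity $K(L,Y)_n \hookrightarrow \Gamma(Z,\mathcal L^n|_Z)$, which is what turns the exact sequence into an induction on the number of irreducible components.
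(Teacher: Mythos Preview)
Your proof is correct and follows essentially the same approach as the paper: decompose $\dim_k L_n$ via the kernel--restriction exact sequence (\ref{eq54}), use reducedness of $X$ to see that the successive kernels are supported on fewer irreducible components, and apply Theorem \ref{Theorem13} on each component. The paper carries this out directly by defining $M^0=L$, $M^i=K(M^{i-1},X_i)$, and observing that $M^s=0$ gives $\dim_k L_n=\sum_i\dim_k(M^{i-1}|X_i)_n$; your inductive formulation with $Y=X_1$ and $Z=X_2\cup\cdots\cup X_s$ is the same telescoping decomposition packaged recursively, and your recursive definition of $r$ as $\mathrm{lcm}(r_1,r_2)$ is arguably a bit more careful than the paper's explicit formula in tracking which indices are actually needed.
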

The theorem says that 
$$
\lim_{n\rightarrow \infty}\frac{\dim_k L_{n}}{n^q}
$$
exists if $n$ is constrained to lie in an arithmetic sequence $a+br$ with $r$ as above, and for some fixed $a$. The conclusions of the theorem are a little weaker than the conclusions of Theorem \ref{Theorem13} for integral varieties. In particular, the index $m(L)$ has no relevance on reduced but not irreducible varieties.

\begin{proof}  Let $X_1,\ldots,X_s$ be the irreducible components of $X$. 
 Define graded linear series $M^i$ on $X$
 by $M^0=L$, $M^i=K(M^{i-1},X_i)$ for $1\le i\le s$. 
 By (\ref{eq54}), for $n\ge 1$, we have exact sequence of $k$-vector spaces 
 $$
 0\rightarrow (M^{j+1})_n=K(M^j,X_{j+1})_n\rightarrow M_n^j\rightarrow (M^j|X_{j+1})_n\rightarrow 0
 $$
 for $1\le j\le s-1$, and
 $$
 M_n^j={\rm Kernel}(L_n\rightarrow \bigoplus_{i=1}^s(L|X_i)_n)
 $$
 for $0\le j\le s$. As in (\ref{eq42}) in the proof of Lemma \ref{Lemma50}, $L\rightarrow \bigoplus_{i=1}^sL|X_i$ is an injection of $k$-algebras since 
 $X$ is reduced. Thus $M_n^s=(0)$, and
 \begin{equation}\label{eq71}
 \dim_kL_n=\sum_{i=1}^{s}\dim_k (M^{i-1}|X_i)_n
 \end{equation}
 for all $n$.
  Let $r=\mbox{LCM}\{m(L|X_i)\mid  q(L|X_i)=q(L)\}$. 
 The theorem now follows from Theorem \ref{Theorem13} applied to each of the $X_i$ with $q(M^{i-1}|X_i)=q(L)$ (we can start with $X_1$ with $q(L|X_1)=Q(L)$).
\end{proof}

\begin{Corollary}\label{Cor72}
Suppose that $X$ is a reduced projective scheme over a perfect field $k$. 
Let $L$ be a graded linear series on $X$ with $q(L)\ge 0$. Then there exists a positive constant $\beta$ such that 
\begin{equation}\label{eq80}
\dim_kL_n<\beta n^{q(L)}
\end{equation}
for all $n$. Further, there exists a positive constant $\alpha$ and a positive integer $m$ such that
\begin{equation}\label{eq81}
\alpha n^{q(L)}<\dim_kL_{mn}
\end{equation}
for all positive integers $n$.
\end{Corollary}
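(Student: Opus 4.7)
The lower bound (\ref{eq81}) is nothing more than a restatement of (\ref{eqKI2}) in Lemma \ref{LemmaKI}(3), which already holds for any graded linear series with $q(L)\ge 0$ on any projective scheme, without reducedness hypotheses. So I would simply invoke it.

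For the upper bound (\ref{eq80}), I would argue directly from Theorem \ref{Theorem18}. Since $X$ is reduced and $k$ is perfect, that theorem furnishes a positive integer $r$ such that for each residue $a\in\{0,1,\ldots,r-1\}$ the limit
$$
c_a \;=\; \lim_{n\to\infty}\frac{\dim_k L_{a+nr}}{n^{q(L)}}
$$
exists as a finite nonnegative real number. In particular, for each $a$ the sequence $\{\dim_k L_{a+nr}/n^{q(L)}\}_{n\ge 1}$ is bounded, so there is a constant $C_a$ with
$$
\dim_k L_{a+nr}\;\le\; C_a\, n^{q(L)} \qquad \text{for all } n\ge 1.
$$
For an arbitrary index $m\ge r$, I would write $m=a+nr$ with $0\le a<r$ and $n\ge 1$; then $m\ge nr$, hence $n^{q(L)}\le m^{q(L)}/r^{q(L)}$, and
$$
\dim_k L_m \;\le\; \frac{C_a}{r^{q(L)}}\,m^{q(L)} \;\le\; C\,m^{q(L)},\qquad C:=\max_{0\le a<r}\frac{C_a}{r^{q(L)}}.
$$
The finitely many remaining values $\dim_k L_m$ with $1\le m<r$ are absorbed by enlarging $C$ to any $\beta>C$ which also exceeds $\dim_k L_m/m^{q(L)}$ for each such $m$; this yields the strict inequality $\dim_k L_n<\beta\,n^{q(L)}$ for all $n\ge 1$.

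There is no real obstacle here: the argument is a straightforward repackaging of the arithmetic-progression-wise limits from Theorem \ref{Theorem18} into a single uniform polynomial bound, handled by taking maxima over the finitely many residue classes modulo $r$ and controlling the transition from the running index $n$ to the original index $m=a+nr$. All the substantive content sits upstream in Theorem \ref{Theorem18} (and, through it, in the Kaveh--Khovanskii semigroup machinery), and once that is granted the corollary follows by bookkeeping alone.
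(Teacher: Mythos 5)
Your proposal is correct, and the lower bound (\ref{eq81}) is obtained exactly as in the paper, by quoting (\ref{eqKI2}); but your route to the upper bound (\ref{eq80}) is genuinely different. The paper does not argue from the statement of Theorem \ref{Theorem18}: it reuses the decomposition (\ref{eq71}), $\dim_k L_n=\sum_{i}\dim_k(M^{i-1}|X_i)_n$, from that theorem's proof, bounds each summand by $\dim_k(L|X_i)_n$, and applies the two-sided variety estimate (\ref{eq61}) on each irreducible component $X_i$, using (via Lemma \ref{Lemma50}) that $q(L|X_i)\le q(L)$. You instead read off, from the existence of the limits in Theorem \ref{Theorem18}, that $\dim_k L_{a+nr}/n^{q(L)}$ is bounded in each residue class, and then convert to a bound in the original index by the change of variable $m=a+nr$ (using $q(L)\ge 0$ so that $n^{q(L)}\le (m/r)^{q(L)}$) and by absorbing the finitely many indices $m<r$; this bookkeeping is sound. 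The one point you use implicitly is that the limits $c_a$ of Theorem \ref{Theorem18} are finite; this is true (each summand in (\ref{eq71}) is controlled by Theorem \ref{Theorem13} and the variety estimate (\ref{eq61}), which yield finite limits), but it is precisely the content the paper's proof makes explicit by bounding the summands directly, so the paper's argument is marginally more self-contained, while yours has the advantage of invoking only the stated theorem rather than the internals of its proof.
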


\begin{proof} Equation (\ref{eq80}) follows from (\ref{eq71}), since $\dim_k(M^{i-1}|X_i)\le \dim_k(L|X_i)$ for all $i$, and since (\ref{eq61}) holds on a variety. Equation (\ref{eq81}) is immediate from (\ref{eqKI2}).
\end{proof}

The following lemma is required for the construction of the next example. It follows from Theorem V.2.17 \cite{H} when $r=1$.  The lemma uses the notation of \cite{H}.

\begin{Lemma}\label{Lemma52} Let $k$ be an algebraically closed field, and write $\PP^1=\PP^1_k$.
Suppose that $r\ge 0$. Let $X=\PP(\mathcal O_{\PP^1}(-1)\bigoplus \mathcal O_{\PP^1}^r)$ with natural projection $\pi:X\rightarrow \PP^1$. Then the complete linear
system $|\Gamma(X,\mathcal O_X(1)\otimes \pi^*\mathcal O_{\PP^1}(1))|$ is base point free, and the only curve contracted by the induced morphism of $X$ is the curve $C$ which is the section of $\pi$ defined by the projection of $\mathcal O(-1)_{\PP^1}\bigoplus \mathcal O_{\PP^1}^r$ onto the first factor.
\end{Lemma}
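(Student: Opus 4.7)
The plan is to simplify the line bundle by a twist. Using the standard isomorphism $\PP(\mathcal{E}) \cong \PP(\mathcal{E} \otimes \mathcal{L})$, set $\mathcal{F} := \mathcal{E} \otimes \mathcal{O}_{\PP^1}(1) = \mathcal{O}_{\PP^1} \oplus \mathcal{O}_{\PP^1}(1)^r$; this identifies $\mathcal{O}_X(1) \otimes \pi^*\mathcal{O}_{\PP^1}(1)$ with $\mathcal{O}_{\PP(\mathcal{F})}(1)$. Each summand of $\mathcal{F}$ is globally generated, hence so is $\mathcal{F}$, and therefore so is $\mathcal{O}_{\PP(\mathcal{F})}(1)$ via the tautological surjection $\pi^*\mathcal{F} \twoheadrightarrow \mathcal{O}_{\PP(\mathcal{F})}(1)$. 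This settles base-point freeness.

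For the analysis of contracted curves, let $V := \Gamma(\PP^1, \mathcal{F})$, of dimension $2r+1$. The surjection $V \otimes \mathcal{O}_{\PP^1} \twoheadrightarrow \mathcal{F}$ yields a closed embedding $\iota: X = \PP(\mathcal{F}) \hookrightarrow \PP(V) \times \PP^1$ on which $\mathcal{O}_{\PP(\mathcal{F})}(1)$ is the pullback of $\mathcal{O}_{\PP(V)}(1)$ along the first projection $\mathrm{pr}_1$. The morphism $\phi$ induced by the complete linear system therefore factors as
$$
X \overset{\iota}{\hookrightarrow} \PP(V) \times \PP^1 \xrightarrow{\mathrm{pr}_1} \PP(V),
$$
so an irreducible curve $D \subset X$ is contracted by $\phi$ if and only if $\iota(D) \subset \{p\} \times \PP^1$ for some $p \in \PP(V)$. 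The task reduces to finding those $p$ for which $\{p\} \times \PP^1$ lies entirely inside $\iota(X)$.

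Such a $p$ corresponds (up to scalar) to a linear functional $\lambda: V \to k$, and $\{p\} \times \PP^1 \subset \iota(X)$ iff $\lambda$ factors through the evaluation $V \to \mathcal{F}_q$ at every $q \in \PP^1$ — equivalently, $\lambda$ vanishes on $\sum_{q \in \PP^1} \ker(V \to \mathcal{F}_q)$. Using the summand decomposition $V = \Gamma(\mathcal{O}_{\PP^1}) \oplus \Gamma(\mathcal{O}_{\PP^1}(1))^r$: evaluation is an isomorphism on $\Gamma(\mathcal{O}_{\PP^1})$, whereas on each $\Gamma(\mathcal{O}_{\PP^1}(1))$ factor the kernel at $q = [s_0 : t_0]$ is the line spanned by $t_0 s - s_0 t$, and as $q$ varies these lines sweep out the full $2$-dimensional factor. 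Hence $\lambda$ must annihilate every $\Gamma(\mathcal{O}_{\PP^1}(1))$ summand, forcing it (up to scalar) to be the projection onto $\Gamma(\mathcal{O}_{\PP^1})$; this pins down a unique $p$, so $\phi$ contracts exactly one $\PP^1$.

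Finally, unwinding the embedding $\iota$ identifies this distinguished $\PP^1$-fiber as the section of $\pi$ corresponding to the quotient $\mathcal{F} \to \mathcal{O}_{\PP^1}$ onto the first summand. Undoing the twist $\mathcal{F} = \mathcal{E} \otimes \mathcal{O}_{\PP^1}(1)$, this is the quotient $\mathcal{E} \to \mathcal{O}_{\PP^1}(-1)$ onto the first summand of $\mathcal{E}$, which is precisely the curve $C$ of the statement. The main obstacle I expect is the bookkeeping that identifies the unique $\PP^1$-fiber with the intended section $C$ in the $\PP(\mathcal{E})$ picture; the remaining steps are a direct computation of evaluation kernels and a routine application of the projective-bundle formalism.
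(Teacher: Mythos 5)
Your proof is correct, but it follows a genuinely different route from the paper's. The paper argues by induction on $r$: it identifies the divisor $V_0\cong \PP(\mathcal O_{\PP^1}(-1)\oplus\mathcal O_{\PP^1}^{r-1})$ with $\mathcal O_X(V_0)\cong\mathcal O_X(1)$, pushes forward exact sequences to show that restriction $\Gamma(X,\mathcal O_X(V_0+F))\to\Gamma(V_0,\mathcal O_{V_0}(1)\otimes\pi^*\mathcal O_{\PP^1}(1))$ is surjective (giving base point freeness via linear equivalence of fibers), and then rules out contracted curves not contained in $V_0$ by intersection numbers $(\gamma\cdot F)$ and $(\gamma\cdot V_0)$, handling curves inside $V_0$ by the induction hypothesis. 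You instead twist to $\mathcal F=\mathcal E\otimes\mathcal O_{\PP^1}(1)=\mathcal O_{\PP^1}\oplus\mathcal O_{\PP^1}(1)^r$, observe that $\mathcal O_{\PP(\mathcal F)}(1)$ is globally generated, and identify the morphism of the complete linear system with $\mathrm{pr}_1\circ\iota$ for the relative embedding $\iota:X\hookrightarrow\PP(V)\times\PP^1$, $V=\Gamma(\PP^1,\mathcal F)$ (using $\pi_*\mathcal O_{\PP(\mathcal F)}(1)=\mathcal F$, so the complete system is indeed $V$); the contracted locus is then pinned down by the computation that $\sum_q\ker\bigl(V\to\mathcal F(q)\bigr)$ is exactly the hyperplane $0\oplus\Gamma(\mathcal O_{\PP^1}(1))^r$, forcing a unique functional $\lambda$ and hence a unique contracted $\PP^1$, which you correctly identify (after undoing the twist) with the section given by the quotient $\mathcal E\to\mathcal O_{\PP^1}(-1)$, i.e.\ the curve $C$. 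Your argument avoids both the induction and the surface-style intersection theory, replacing them with the quotient-convention bookkeeping for $\PP(V)$ and evaluation kernels; the paper's induction, on the other hand, yields along the way the explicit surjectivity of restriction to $V_0$ and the identifications $\mathcal O_X(V_0)\cong\mathcal O_X(1)$, which are of independent use. Both establish the lemma; yours is arguably shorter and more global, and it handles the base case $r=0$ uniformly.
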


\begin{proof}
We prove this by induction on $r$. 

First suppose that $r=0$. Then $\pi$ is an isomorphism, and $X=C$.
$$
\mathcal O_X(1)\otimes \pi^*\mathcal O_{\PP^1}(1)\cong \pi_*\mathcal O_X(1)\otimes \mathcal O_{\PP^1}(1)\cong \mathcal O_{\PP^1}(-1)\otimes \mathcal O_{\PP^1}(1)\cong \mathcal O_{\PP^1},
$$
from which the statement of the lemma follows.

Now suppose that $r>0$ and the statement of the lemma is true for $r-1$. Let $V_0$ be the $\PP^1$-subbundle of $X$ corresponding to projection onto the first $r-1$
factors,
\begin{equation}\label{eq51}
 0\rightarrow \mathcal O_{\PP^1}\rightarrow \mathcal O_{\PP^1}(-1)\bigoplus \mathcal O_{\PP^1}^r \rightarrow \mathcal O_{\PP^1}(-1)\bigoplus \mathcal O_{\PP^1}^{r-1}\rightarrow 0.
 \end{equation}
 Apply $\pi_*$ to the exact sequence 
 $$
 0\rightarrow \mathcal O_X(1)\otimes \mathcal O_X(-V_0)\rightarrow \mathcal O_X(1)\rightarrow \mathcal O_{V_0}(1)\rightarrow 0
 $$
 to obtain the exact sequence (\ref{eq51}), from which we see that $\mathcal O_X(V_0)\cong \mathcal O_X(1)$ and $V_0\cong \PP(\mathcal O_{\PP^1}(-1)\bigoplus \mathcal O_{\PP^1}^{r-1})$ with $\mathcal O_X(V_0)\otimes \mathcal O_{V_0}\cong  \mathcal O_{V_0}(1)$. Let $F$ be the fiber over a point in $\PP^1$ by $\pi$. We have that $\mathcal O_X(1)\otimes \pi^*\mathcal O_{\PP^1}\cong \mathcal O_X(V_0+F)$.
 Apply $\pi_*$ to
 $$
 0\rightarrow \mathcal O_X(F) \rightarrow \mathcal O_X(V_0+F)\rightarrow \mathcal O_X(V_0+F)\otimes \mathcal O_{V_0}\rightarrow 0
 $$
 to get
 $$
 0\rightarrow \mathcal O_{\PP^1}(1)\rightarrow \mathcal O_{\PP^1}\bigoplus \mathcal O_{\PP^1}(1)^r\rightarrow \pi_*(\mathcal O_{V_0}(1)\otimes \pi^*\mathcal
 O_{\PP^1}(1))\rightarrow 0.
 $$
 Now take global sections to obtain that the restriction map 
 $$
 \Gamma(X,\mathcal O_X(V_0+F))\rightarrow \Gamma(V_0, \mathcal O_{V_0}(1)\otimes \pi^*\mathcal
 O_{\PP^1}(1)) 
 $$
 is a surjection. In particular, by the induction statement, $V_0$ contains no base points of $\Lambda=|\Gamma(X,\mathcal O_X(V_0+F))|$. 
 Since any two fibers $F$ over points of $\PP^1$ are linearly equivalent, $\Lambda$ is base point free.

 Suppose that $\gamma$ is a curve of $X$ which is not contained in $V_0$. If $\pi(\gamma)=\PP^1$ then $(\gamma\cdot F)>0$ and $(\gamma\cdot V_0)\ge 0$ so that $\gamma$ is not contracted by $\Lambda$. If $\gamma$ is in a fiber of $F$ then $(\gamma\cdot F)=0$. Let $F\cong \PP^r$ be the fiber of $\pi$ containing $\gamma$. Let $h=F\cdot V_0$, a hyperplane section of $F$. Then
 $(\gamma\cdot V_0)=(\gamma\cdot h)_{F}>0$. Thus $\gamma$ is not contracted by $\Lambda$. By induction on $r$, we have that $C$ is the only curve on $V_0$ which is contracted by $\Lambda$. We have thus proven the induction statement for $r$.
 
 \end{proof}

\begin{Example}\label{Example3} Let $k$ be an algebraically closed field.
Suppose that $s$ is a positive integer and $a_i\in \ZZ_+$ are positive integers for $1\le i\le s$. Suppose that $d>1$.
Then there exists a connected reduced projective scheme $X$ over $k$ which is equidimensional of dimension $d$ with a line bundle $\mathcal L$ on $X$
and a bounded function $\sigma(n)$ such that 
$$
\dim_k\Gamma(X,\mathcal L^n)=\lambda(n)\binom{d+n-1}{d-1}+\sigma(n),
$$
where $\lambda(n)$ is the periodic function
$$
\lambda(n)=|\{i\mid n\equiv 0 (a_i)\}|.
$$
The Kodaira-Iitaka dimension of $L$ is  $q(L)=d-1$. Let $m'=\mbox{LCM}\{a_i\}$.
The limit
$$
\lim_{n\rightarrow \infty}\frac{\dim_k L_n}{n^{d-1}}
$$
exists whenever $n$ is constrained to be in an arithmetic sequence $a+bm'$ (with any fixed $a$). We  have that $\dim_kL_n\ne 0$ for all $n$ if some $a_i=1$, so the
conclusions of Theorem \ref{Theorem10} do not quite hold in this example.
\end{Example}

\begin{proof} Let $E$ be an elliptic curve over $k$. Let $p_0, p_1,\ldots, p_s$ be points on $E$ such that the line bundles $\mathcal O_E(p_i-p_0)$ have order $a_i$. Let $S=E\times_k\PP^{d-1}_k$, and define line bundles $\mathcal L_i=\mathcal O_E(p_i-p_0)\otimes \mathcal O_{\PP^{d-1}}(1)$ on $S$.
The Segre embedding gives  a closed embedding of $S$ in $\PP^{r}$ with $r=3d-1$ Let $\pi:X=\PP(\mathcal O_{\PP^1}(-1)\bigoplus \mathcal O_{\PP^1}^r)
\rightarrow \PP^1$ be the projective bundle, and let $C$ be the section corresponding to the surjection
of $\mathcal O_{\PP^1}(-1)\bigoplus \mathcal O_{\PP^1}^r$ onto the first factor. Let $b_1,\ldots,b_s$ be distinct points of $\PP^1$ and let $F_i$ be the fiber by $\pi$ over $b_i$. Let $S_i$ be an embedding of $S$ in $F_i$. We can if necessary make a translation of $S_i$ so that the point $c_i=C\cdot F_i$ lies
on $S_i$, but is not contained in $p_j\times\PP^{d-1}$ for any $j$. 
We have a line bundle $\mathcal L'$ on the (disjoint) union $T$ of the $S_i$ defined by $\mathcal L'|S_i=\mathcal L_i$.

By Lemma \ref{Lemma52}, there is a morphism $\phi:X\rightarrow Y$ which only contracts the curve $C$. $\phi$ is actually birational and an isomorphism away from $C$, but we do not need to verify this, as we can certainly obtain this  after  replacing $\phi$ with  the Stein factorization of $\phi$. Let $Z=\phi(T)$. 
The birational morphism $T\rightarrow Z$ is an isomorphism away from the points $c_i$, which are not contained on the support of the divisor defining $\mathcal L'$. Thus $\mathcal L'|(T\setminus\phi(C))$ extends naturally to a line bundle $\mathcal L$ on $Z$.

We have a short exact sequence
$$
0\rightarrow \mathcal O_Z\rightarrow \bigoplus_{i=1}^s \mathcal O_{S_i}\rightarrow \mathcal F\rightarrow 0
$$
where $\mathcal F$ has finite support. Tensoring this sequence with $\mathcal L^n$ and taking global sections, we obtain that
$$
0\le \sum_{i=1}^s\dim_k\Gamma(S_i,\mathcal L_i^n)-\dim_k\Gamma(Z,\mathcal L^n) \le \dim_k\mathcal F
$$
for all $n$. Since 
$$
\Gamma(S_i,\mathcal L_i^n)\cong \Gamma(E,\mathcal O_E(n(p_i-p_0)))\otimes_k\Gamma(\PP^{d-1},\mathcal O_{\PP^{d-1}}(n))
$$
by the Kuenneth formula, we obtain the conclusions of the example.

 \end{proof}

\section{Perversity on nonreduced schemes}

In this section we give a series of examples, showing interesting growth of graded linear series on nonreduced projective schemes. The examples show that  the wonderful theorems about growth for graded linear series on varieties do not generalize to nonreduced schemes.

\subsection{An example with maximal Kodaira-Iitaka dimension} 

Let $i_1=2$ and $r_1=\frac{i_1}{2}$. For $j\ge 1$, inductively define $i_{j+1}$ so that $i_{j+1}$ is even and $i_{j+1}>2^ji_j$.
Let $r_{j+1}=\frac{i_{j+1}}{2}$. For $n\in \ZZ_+$, define

\begin{equation}\label{eqsigma}
\sigma(n)=\left\{\begin{array}{ll}
1&\mbox{ if } n=1\\
\frac{i_j}{2} &\mbox{ if }i_j\le n<i_{j+1}
\end{array}\right.
\end{equation}

 \begin{Lemma}\label{Lemma1}
 Suppose that $a\in \NN$ and $r\in \ZZ_+$. Then given $m>0$ and $\epsilon>0$, there exists a positive integer $n=a+br$ with $b\in \NN$ such that $n>m$ and
 $$
 \left| \frac{\sigma(n)}{n}-\frac{1}{2}\right|<\epsilon
 $$
 \end{Lemma}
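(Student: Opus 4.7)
The plan is to exploit the fact that on each ``plateau'' interval $i_j\le n<i_{j+1}$, the function $\sigma$ is constant equal to $i_j/2$, so
$$
\frac{\sigma(n)}{n}=\frac{i_j}{2n}.
$$
In particular, at the left endpoint $n=i_j$ the ratio equals exactly $1/2$, and it decreases as $n$ runs through the plateau. Thus $\sigma(n)/n$ is close to $1/2$ precisely when $n$ is close to (but not much larger than) $i_j$. A short calculation shows that, assuming without loss of generality $\epsilon<1/2$, the inequality $|\sigma(n)/n-1/2|<\epsilon$ is equivalent to
$$
i_j\le n<\frac{i_j}{1-2\epsilon},
$$
so the task reduces to finding some $n$ in the arithmetic progression $\{a+br:b\in\NN\}$ lying inside this interval, with the extra requirements $n>m$ and $n<i_{j+1}$.

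The key observation is that the length of the interval $[i_j,i_j/(1-2\epsilon))$ is
$$
\frac{2\epsilon\, i_j}{1-2\epsilon},
$$
which tends to infinity as $j\to\infty$ since $i_j\to\infty$. Therefore, for all sufficiently large $j$, this length exceeds $r$, and any interval of length $>r$ must contain an element of the arithmetic progression $a+b r$. Simultaneously, for $j$ large we have $i_j>m$, and the condition $i_{j+1}>2^j i_j$ in the construction easily forces $i_j/(1-2\epsilon)<i_{j+1}$, so the chosen $n$ indeed lies in the single plateau $[i_j,i_{j+1})$ on which our formula $\sigma(n)/n=i_j/(2n)$ is valid.

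The execution is then: fix $\epsilon<1/2$ (replacing $\epsilon$ by $\min(\epsilon,1/4)$ if necessary), choose $j$ so large that $i_j>m$, that $2\epsilon i_j/(1-2\epsilon)>r$, and that $i_j/(1-2\epsilon)<i_{j+1}$; pick $n\in[i_j,i_j/(1-2\epsilon))$ with $n\equiv a\pmod r$; and verify directly that $n>m$ and $|\sigma(n)/n-1/2|<\epsilon$. There is no real obstacle here, as the asymptotics of the sequence $\{i_j\}$ were engineered precisely so that the plateaus grow faster than any fixed modulus $r$; the only point requiring care is the elementary inequality converting the target estimate $|\sigma(n)/n-1/2|<\epsilon$ into the explicit interval condition on $n$.
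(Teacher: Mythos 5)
Your proposal is correct and follows essentially the same route as the paper: both arguments locate an element of the progression $a+br$ within distance roughly $r$ of the left endpoint $i_j$ of a sufficiently late plateau, where $\sigma(n)/n=i_j/(2n)$ is within $\epsilon$ of $1/2$. The only difference is organizational — the paper fixes a window of $r$ consecutive integers starting at $i_j$ and checks the estimate there, while you solve the estimate for the exact admissible interval $[i_j, i_j/(1-2\epsilon))$ and note its length eventually exceeds $r$ — which is the same idea with the quantifiers rearranged.
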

 
 \begin{proof} Choose $j$ sufficiently large that $i_j>m$, $i_j+r<i_{j+1}$  and
 \begin{equation}\label{eq1}
 \frac{i_j}{2(i_j+k)}> \frac{1}{2}-\epsilon
 \end{equation}
 for $0\le k<r$.
There exists $n=i_j+k$ with $0\le k<r$ in the arithmetic sequence $a+br$.
$$
\frac{\sigma(n)}{n}=\frac{i_j}{2n}=\frac{i_j}{2(i_j+k)}.
$$
By (\ref{eq1}),
$$
\frac{1}{2}\ge \frac{i_j}{2(i_j+k)}> \frac{1}{2}-\epsilon.
$$
\end{proof}

\begin{Lemma}\label{Lemma2}
 Suppose that $a\in \NN$ and $r\in \ZZ_+$. Then given $m>0$ and $\epsilon>0$, there exists a positive integer $n=a+br$ with $b\in \NN$ such that $n>m$ and
 $$
 \left| \frac{\sigma(n)}{n}\right|<\epsilon.
 $$
 \end{Lemma}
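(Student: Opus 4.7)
The plan is to exploit the very rapid growth $i_{j+1} > 2^j i_j$ to make the ratio $\sigma(n)/n$ small near the top of each interval $[i_j, i_{j+1})$. Specifically, on that interval $\sigma(n)/n = i_j/(2n)$, so taking $n$ close to $i_{j+1}-1$ gives
$$
\frac{\sigma(n)}{n} = \frac{i_j}{2n} \le \frac{i_j}{2(i_{j+1}-1)} \sim \frac{i_j}{i_{j+1}} < \frac{1}{2^j},
$$
which tends to $0$ as $j \to \infty$. The contrast with the previous lemma is that there one used $n$ near the \emph{bottom} of an interval to get a ratio close to $1/2$; here we use $n$ near the \emph{top}.

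First I would choose $j$ large enough that three conditions hold simultaneously: $1/2^j < \epsilon$, $i_{j+1}/2 > m$, and $i_{j+1}/2 > r$ (the last ensures the target window is wider than the common difference of the arithmetic progression). Using $i_{j+1} > 2^j i_j \ge 2 i_j$ for $j \ge 1$, one checks that the window $[i_{j+1}/2,\, i_{j+1})$ lies entirely inside $[i_j, i_{j+1})$, so $\sigma$ is constant and equal to $i_j/2$ throughout this window.

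Second, I would produce an integer $n = a + br$ with $b \in \NN$ lying in $[i_{j+1}/2,\, i_{j+1})$. This is automatic: the window has length $i_{j+1}/2 > r$, so it contains at least one full residue class mod $r$; in particular it contains an element congruent to $a$ mod $r$. The condition $n > m$ is then guaranteed by $n \ge i_{j+1}/2 > m$.

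Finally, with such an $n$ in hand, I compute
$$
\frac{\sigma(n)}{n} = \frac{i_j}{2n} \le \frac{i_j}{i_{j+1}} < \frac{1}{2^j} < \epsilon,
$$
completing the proof. The only mild obstacle is making sure the arithmetic progression $a + b r$ really does meet the window $[i_{j+1}/2, i_{j+1})$, but this is immediate once the window length exceeds $r$, which is arranged by choosing $j$ sufficiently large (using $i_{j+1} \to \infty$). No growth estimate beyond $i_{j+1} > 2^j i_j$ is needed.
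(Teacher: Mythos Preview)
Your proof is correct and follows essentially the same approach as the paper: both arguments pick $n$ near the top of the interval $[i_j,i_{j+1})$ so that $\sigma(n)/n=i_j/(2n)$ is forced small by the growth condition $i_{j+1}>2^j i_j$, and both ensure the arithmetic progression meets the chosen window by taking it wider than $r$. The only cosmetic difference is that the paper uses the narrower window $\{i_{j+1}-r,\ldots,i_{j+1}-1\}$ while you use the wider window $[i_{j+1}/2,\,i_{j+1})$, which lets you write the cleaner bound $\sigma(n)/n\le i_j/i_{j+1}<1/2^{\,j}$.
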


\begin{proof} Choose $j$ sufficiently large that $i_j>m+r$, $2^ji_j>r$ and
\begin{equation}\label{eq2}
\frac{i_j}{2(2^ji_j-k)}<\epsilon
\end{equation}
for $0<k\le r$. Let $n=i_{j+1}-k$ with $0<k\le r$ in the arithmetic sequence $a+br$.
$$
\frac{\sigma(n)}{n}=\frac{i_j}{2n}=\frac{i_j}{2(i_{j+1}-k)}.
$$
By (\ref{eq2}), 
$$
0<\frac{i_j}{2(i_{j+1}-k)}<\epsilon.
$$
\end{proof}

It follows from the previous two lemmas that 
$$
\lim_{n\rightarrow \infty} \frac{\sigma(n)}{n}
$$
does not exist, even when $n$ is constrained to lie in an  arithmetic sequence.

\begin{Example}\label{Example25} Let $k$ be a field, and let $X$ be the $d$-dimensional projective nonreduced  $k$-scheme consisting of a double linear hyperplane in $\PP^{d+1}_k$, with $d\ge 1$.
 There exists a graded  linear series
$L$  for $\mathcal O_X(2)$ with maximal Kodaira-Iitaka dimension $q(L)=d$, such that 
\begin{equation}\label{eqKI6}
\lim_{n\rightarrow \infty} \frac{\dim_k(L_n)}{n^d}
\end{equation}
does not exist, even when $n$ is constrained to lie in an arithmetic sequence.
\end{Example}

$L_n$ is constructed to be a subspace of $\Gamma(X,\mathcal O_X(2n))$ which generates 
$\mathcal O_X(2n)$ at all points except along a fixed $d-1$ dimensional linear subspace of $X$.

\begin{proof}  We can choose homogeneous coordinates on $\PP^2_k$ so that $X=\mbox{Proj}(S)$ where $S=k[x_0,x_1,\ldots,x_{d+1}]/(x_1^2))$,
and the fixed linear subspace is $Z(x_0,x_1)\subset X$. Let $\overline x_i$ be the classes of $x_i$ in $S$, so that $S=k[\overline x_0,\overline x_1,\ldots,\overline x_{d+1}]$ (with $\overline x_1^2=0$). Define a graded family of homogeneous ideals in $S$ by
$$
J_n=(\overline x_0^n, \overline x_1\overline x_0^{n-\sigma(n)})
$$
for $n\ge 1$ and $J_0=S$. We have that $J_mJ_n\subset J_{m+n}$ since $\sigma(j)\le \sigma(k)$ for $j\le k$.

Let $\tilde J_n$ be the sheafification of $J_n$ on $X$. Define
$$
L_n=\Gamma(X,\tilde J_n\otimes \mathcal O_X(2n))\subset \Gamma(X,\mathcal O_X(2n))=S_{2n}.
$$
The $L_n$ define a graded linear series $L$ on $X$.

Let $M_i$ be the set of all monomials in the $d+1$ variables $\overline x_0, \overline x_2,\ldots,\overline x_{d+1}$ of degree $i$.

$L_n$ has a $k$-basis consisting of $\overline x_0^{n}M_{n-\alpha}$  and
$\overline x_1\overline x_0^{n-\sigma(n)}M_{n+\sigma(n)-1}$.
Thus
\begin{equation}\label{eq60}
\dim_kL_n=\binom{d+n}{d}+\binom{d+n+\sigma(n)-1}{d}.
\end{equation}

Let $P(t)$ be the degree $d$ rational polynomial $P(t)=\binom{d+t}{d}$. We have that
\begin{equation}\label{eq90}
P(t)=\frac{t^d}{d!}+\mbox{ lower order terms in $t$}.
\end{equation}

Let $n=a+br$ be an arithmetic sequence (with $a,r$ fixed). Suppose that $n_0$ is a positive integer and $\epsilon>0$ is a real number. 

Since $0\le \sigma(n)\le\frac{n}{2}$ for all $n$, it follows from (\ref{eq60}) and (\ref{eq90}) that there exists $n_1\ge n_0$ such that $n\ge n_1$ implies
$$
|\frac{\dim_kL_n}{n^d}-\frac{1}{d!}(1+(1+\frac{\sigma(n)}{n})^d)|<\frac{\epsilon}{2}.
$$
By Lemma \ref{Lemma1}, there exists an integer $n_2>n_1$ such that $n_2$ is in the arithmetic sequence $n=a+br$ and
$$
|\frac{1}{d!}(1+(1+\frac{\sigma(n_2)}{n_2})^d)-\frac{1+(1+\frac{1}{2})^d}{d!}|<\frac{\epsilon}{2}.
$$
Thus 
\begin{equation}\label{eq63}
|\frac{\dim_kL_{n_2}}{n_2^d}-\frac{1+(\frac{3}{2})^d}{d!}|<\epsilon.
\end{equation}
On the other hand, by Lemma \ref{Lemma2}, there exists an integer $n_3>n_1$ such that $n_3$ is in the arithmetic sequence $n=a+br$ and 
$$
|\frac{1}{d!}(1+(1+\frac{\sigma(n_3)}{n_3})^d)-\frac{2}{d!}|<\frac{\epsilon}{2}.
$$
Thus 
\begin{equation}\label{eq64}
|\frac{\dim_kL_{n_3}}{n_3^d}-\frac{2}{d!}|<\epsilon.
\end{equation}
By (\ref{eq63}) and (\ref{eq64}) we have that the limit (\ref{eqKI6}) does not exist when $n$ is constrained to line in the arithmetic sequence $n=a+br$.
Since this sequence was arbitrary, we have obtained the conclusions of the example.
\end{proof}

The above example is a graded linear series on a double linear hyperplane $X$ in $\PP^{d+1}$. We observe that if $\mathcal L$ is a line bundle on $X$, then not only does the limit exist for the section ring of $\mathcal L$, it is even a rational number. Suppose that $\mathcal L$ is a line bundle on $X$. Let $X_0$ be the reduced linear subspace of $\PP^{d+1}$ which has the same support as $X$. We have a short exact sequence
of coherent $\mathcal O_{X_0}$ modules
$$
0\rightarrow \mathcal O_{X_0}(-1)\rightarrow \mathcal O_X\rightarrow \mathcal O_{X_0}\rightarrow 0.
$$
From the exact sequence 
$$
H^1(X,\mathcal O_{X_0}(-1))\rightarrow \mbox{Pic}(X)\rightarrow \mbox{Pic}(X_0)\rightarrow H^2(X,\mathcal O_{X_0}(-1))
$$
of Exercise III.4.6 \cite{H}, and from the cohomology of projective space, we see that restriction gives an isomorphism $\mbox{Pic}(X)\cong\mbox{Pic}(X_0)$.
Since every line bundle on the linear subspace $X_0$ is the restriction of a line bundle on $\PP^{d+1}$, we have that $\mathcal L\cong\mathcal O_{\PP^{d+1}}(a)\otimes \mathcal O_X$ for some $a\in\ZZ$.
From the exact sequences 
$$
0\rightarrow \mathcal O_{\PP^{d+1}}(na-2)\rightarrow \mathcal O_{\PP^{d+1}}(na)\rightarrow \mathcal L^n\rightarrow 0,
$$
we see that 
$$
\dim_k\Gamma(X,\mathcal L^n)=\dim_k\Gamma(\PP^{d+1},\mathcal O_{\PP^{d+1}}(na))-\dim_k\Gamma(\PP^{d+1},\mathcal O_{\PP^{d+1}}(na-2)),
$$
which is zero for all positive $n$ if $a<0$, is 1 for all positive $n$ if $a=0$, and is equal to
the polynomial
$$
\binom{na+(d+1)}{d+1}-\binom{na-2+(d+1)}{d+1}
$$
for  $n\ge 2$ if $a>0$.

\subsection{Examples with Kodaira-Iitaka dimension $-\infty$}
It is much easier to construct perverse examples with Kodaira-Iitaka dimension $-\infty$, since the condition $L_mL_n\subset L_{m+n}$ can be trivial
in this case. If $X$ is a reduced variety, and $L$ is a graded linear series on $X$, then it follows from Corollary \ref{Cor72} that there is an upper bound 
$\dim_kL_n<\beta n^{q(L)}$
for all $n$. However, for nonreduced varieties of dimension $d$, we only have the upper bound $\dim_k<\gamma n^d$ of (\ref{eqKI4}).
Here is an example with $q(L)=-\infty$ and maximal growth of order $n^d$.

\begin{Example}\label{Example2} Let $k$ be a field, and let $X$ be the one dimensional projective non reduced  $k$-scheme consisting of a double line in $\PP^2_k$. Let $T$ be a subset of the positive integers.
 There exists a graded  linear series
$L$  for $\mathcal O_X(2)$ such that 
$$
\dim_kL_n= \left\{\begin{array}{ll}
n+1&\mbox{ if }n\in T\\
0&\mbox{ if }n\not\in T
\end{array}\right.
$$

In the example, 
we have that $q(L)=-\infty$, but $\dim_kL_n$ is $O(n)=O(n^{\dim X})$. 
\end{Example}

\begin{proof}
 We can choose homogeneous coordinates coordinates on $\PP^2_k$ so that $X=\mbox{Proj}(S)$, where $S=k[x_0,x_1,x_2]/(x_1^2)$. Let $\overline x_i$ be the classes of $x_i$ in $S$, so that $S=k[\overline x_0,\overline x_1,\overline x_2]$.
Define a graded linear series $L$ for $\mathcal O_X(2)$ by
defining $L_n$ to be the $k$-subspace of $\Gamma(X,\mathcal O_X(2n))$ spanned by $\{\overline x_1\overline x_0^i\overline x_2^j\mid i+j=n\}$
if $n\in T$ and $L_n=0$ if $n\not\in T$. Then
$$
\dim_kL_n= \left\{\begin{array}{ll}
n+1&\mbox{ if }n\in T\\
0&\mbox{ if }n\not\in T
\end{array}\right.
$$
\end{proof}

We modify the above example a little bit to find another  example with interesting growth.

\begin{Theorem}\label{Theorem21} Let $k$ be a field, and let $X$ be the one dimensional projective non reduced  $k$-scheme consisting of a double line in $\PP^2_k$. Let $T$ be any infinite subset of the positive integers $\ZZ_+$ such that $\ZZ_+\setminus T$ is also infinite.
 There exists a graded  linear series
$L$  for $\mathcal O_X(2)$ such that 
$$
\dim_kL_n= \left\{\begin{array}{ll}
\lceil \log(n)\rceil+1 &\mbox{ if }n\in T\\
\lceil \frac{\log(n)}{2}\rceil+1&\mbox{ if }n\not\in T
\end{array}\right.
$$
In this example we have $q(L)=\infty$.
\end{Theorem}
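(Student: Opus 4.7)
The plan is to follow the recipe of Example \ref{Example2} verbatim, exploiting the fact that on a double line the nilpotent coordinate kills products. Take $X=\mbox{Proj}(S)$ with $S=k[x_0,x_1,x_2]/(x_1^2)$, and for each $n\ge 1$ consider the $k$-subspace
$$
V_n:=\mbox{span}_k\{\overline x_1\overline x_0^i\overline x_2^j\mid i+j=2n-1\}\subset \Gamma(X,\mathcal O_X(2n)),
$$
of dimension $2n$. Since $\overline x_1^2=0$, any two homogeneous elements of $V_m$ and $V_n$ multiply to zero. So if we pick $L_n\subset V_n$ for every $n\ge 1$ (and set $L_0=k$), then $L_mL_n=0\subset L_{m+n}$ automatically for all $m,n\ge 1$, and $L_0\cdot L_n=L_n$. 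Thus \emph{any} assignment $n\mapsto L_n\subset V_n$ yields a graded linear series for $\mathcal O_X(2)$, and we are free to prescribe $\dim_k L_n$ as we wish, subject only to $\dim_k L_n\le 2n$.

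For $n\in T$ choose any subspace $L_n\subset V_n$ of dimension $\lceil \log n\rceil+1$, and for $n\notin T$ of dimension $\lceil\log(n)/2\rceil+1$. These are consistent with $\dim L_n\le 2n$ for all $n\ge 1$, so the choices are legal. By construction the dimensions are exactly as stated.

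Finally I verify the Kodaira-Iitaka dimension. (The printed value $q(L)=\infty$ must be a typo for $q(L)=-\infty$, since (\ref{eqKI1}) gives $q(L)\le\dim X=1$.) Suppose $\sigma(L)>0$, so there exists a nonzero homogeneous $y\in L_m$ with $m>0$ that is algebraically independent over $k$. Then $y^n\in L_{mn}$ for every $n\ge 1$, and the elements $y,y^2,\ldots,y^n$ are linearly independent; but wait, we must be more careful because $L_mL_n\subset L_{m+n}$ only guarantees $y^2\in L_{2m}$, not that $y^2\ne 0$. However, any algebraically independent element in a graded $k$-algebra is in particular not nilpotent, so $y^n\ne 0$ for all $n$, giving $\dim_k L_{mn}\ge n$ for all $n$. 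This contradicts the bound $\dim_kL_{mn}\le \lceil\log(mn)\rceil+1=O(\log n)$ coming from our construction. Hence $\sigma(L)=0$ and $q(L)=-\infty$.

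The only delicate point is the algebraic-independence step at the end; everything else is a direct imitation of the preceding example. The main obstacle is really just recognizing that the nilpotence of $\overline x_1$ makes the multiplicative constraint on a graded linear series completely vacuous on $\bigoplus V_n$, which is what gives the flexibility to build arbitrarily slow growth tailored to $T$.
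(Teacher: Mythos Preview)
Your construction is correct and is essentially the same as the paper's: the paper also takes $S=k[x_0,x_1,x_2]/(x_1^2)$ and, setting $\lambda(n)=\lceil\log n\rceil$ for $n\in T$ and $\lambda(n)=\lceil\log(n)/2\rceil$ otherwise, lets $L_n$ be spanned by the monomials $\overline x_0^{n}\overline x_1,\overline x_0^{n-1}\overline x_1\overline x_2,\ldots,\overline x_0^{n-\lambda(n)}\overline x_1\overline x_2^{\lambda(n)}$. Your version, allowing any subspace of $V_n$ of the right dimension, is a harmless generalization of the same idea.

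There is, however, a slip in your verification of $q(L)=-\infty$. From $y^n\ne 0$ you conclude $\dim_k L_{mn}\ge n$; this is false, since $y^n$ is a single nonzero element of $L_{mn}$ and only gives $\dim_k L_{mn}\ge 1$, which contradicts nothing. The fix is already contained in your own setup: you observed that $V_m\cdot V_n=0$ because every element carries a factor of $\overline x_1$. In particular $y^2=0$ for any homogeneous $y\in L_m$ with $m>0$, so no such $y$ can be algebraically independent over $k$. Hence $\sigma(L)=0$ and $q(L)=-\infty$ directly, with no appeal to growth rates. (The paper does not spell out this verification at all; it simply asserts the value of $q(L)$, with the evident typo you noted.)
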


\begin{proof}
 We can choose homogeneous coordinates coordinates on $\PP^2_k$ so that $X=\mbox{Proj}(S)$, where $S=k[x_0,x_1,x_2]/(x_1^2)$. Let $\overline x_i$ be the classes of $x_i$ in $S$, so that $S=k[\overline x_0,\overline x_1,\overline x_2]$.
Define
$$
\lambda(n)=\left\{\begin{array}{ll}
\lceil \log(n)\rceil &\mbox{ if }n\in T\\
\lceil \frac{\log(n)}{2}\rceil&\mbox{ if }n\in \ZZ_+\setminus T.
\end{array}\right.
$$ 
Define a graded linear series $L$ for $\mathcal O_X(2)$ by
defining $L_n$ to be the $k$-subspace of $\Gamma(X,\mathcal O_X(n))$ spanned by 
$$
\overline x_0^{n}\overline x_1,\overline x_0^{n-1}\overline x_1\overline x_2,\ldots, \overline x_0^{n-\lambda(n)}\overline x_1\overline x_2^{\lambda(n)}.
$$
Then $L_n$ has the desired property.
\end{proof}

The following is an example of a line bundle on a non reduced scheme for which there is interesting growth. The characteristic $p>0$ plays a role in the construction.

\begin{Example}\label{Example16} Suppose that  $d\ge 1$. There exists an irreducible but  nonreduced projective variety $Z$ of dimension $d$ over a field of  positive characteristic $p$,   and a line bundle $\mathcal N$ on $Z$, whose Kodaira-Iitaka dimension is  $-\infty$, such that 
$$
\dim_k\Gamma(Z,\mathcal N^{n})=
\left\{\begin{array}{ll}
 \binom{d+n-1}{d-1}&\mbox{ if $n$ is  a power of $p$}\\
 \\
0&\mbox{ otherwise}
\end{array}\right.
$$

\end{Example}
In particular,  given a positive integer $r$, there exists at least one integer   $a$ with $0\le a<r$ such that the limit
$$
\lim_{n\rightarrow \infty}\frac{\dim_k\Gamma(Z,\mathcal N^n)}{n^{d-1}}
$$
does not exist when $n$ is constrained to lie   in the arithmetic sequence $a+br$. 

\begin{proof}

Suppose that $p$ is a prime number such that $p\equiv 2\,\, (3)$.
In Section 6 of \cite{CS}, a projective genus 2 curve $C$ over an algebraic function field $k$ of characteristic $p$ is constructed, which has a $k$-rational point $Q$ and a degree zero line bundle $\mathcal L$ with the properties that
$$
\dim_k \Gamma(C,\mathcal L^n\otimes \mathcal O_C(Q))
=\left\{\begin{array}{ll}
1&\mbox{ if $n$ is  a power of $p$}\\
0&\mbox{ otherwise}
\end{array}\right.
$$
and
\begin{equation}\label{eq32}
\Gamma(C,\mathcal L^n)=0\mbox{ for all }n.
\end{equation}

Let $\mathcal E=\mathcal O_C(Q)\bigoplus \mathcal O_C$. Let $S=\PP(\mathcal E)$ with natural projection $\pi:S\rightarrow C$, a ruled surface over $C$. Let $C_0$ be the section of $\pi$ corresponding to the
 surjection onto the second factor $\mathcal E\rightarrow \mathcal O_C\rightarrow 0$. By Proposition V.2.6 \cite{H}, we have that
$\mathcal O_S(-C_0)\otimes_{\mathcal O_S}\mathcal O_{C_0}\cong \mathcal O_C(Q)$. Let $X$ be the nonreduced subscheme $2C_0$ of $S$. We have a short exact sequence
$$
0\rightarrow \mathcal O_{C}(Q)\rightarrow \mathcal O_{X}\rightarrow \mathcal O_C\rightarrow 0.
$$
 Let $\mathcal M=\pi^*(\mathcal L)\otimes_{\mathcal O_S}\mathcal O_X$. Then
we have short exact sequences
\begin{equation}\label{eq31}
0\rightarrow \mathcal L^n\otimes_{\mathcal O_C}\mathcal O_C(Q)\rightarrow \mathcal M^n\rightarrow \mathcal L^n\rightarrow  0.
\end{equation}

 By (\ref{eq31}) and (\ref{eq32}), we have that

$$
\begin{array}{lll}
\dim_k\Gamma(X,\mathcal M^n)&=&\dim_k\Gamma(C,\mathcal L^n\otimes \mathcal O_C(Q))\\
\\
&=&\left\{
\begin{array}{ll}
1&\mbox{ if $n$ is  a power of $p$}\\
0&\mbox{ otherwise}
\end{array}\right.\\
\end{array}
$$

Now let $Z=X\times \PP_k^{d-1}$ and $\mathcal N=\mathcal M\otimes \mathcal O_{\PP}(1)$. By the Kuenneth formula, we have that
$$
\Gamma(Z,\mathcal N^n)=\Gamma(X,\mathcal M^n)\otimes_k\Gamma(\PP^{d-1},\mathcal O_{\PP}(n))
$$
from which the conclusions of the example follow.
\end{proof}

\section{A graded family of $m$-primary ideals which does not have a limit}

In this section we give an example showing lack of limits for families of $m$-primary ideals in non reduced rings. 
A family of ideals $\{I_n\}$ in a $d$-dimensional local ring $R$ indexed by $n$ is a family of ideals in $R$ if $I_0=R$ and $I_mI_n\subset I_{m+n}$ for all $m,n$.  $\{I_n\}$ is an $m_R$-primary family if $I_n$ is $m_R$-primary for $n\ge 1$.
The limit 
 \begin{equation}\label{eq100}
 \lim_{n\rightarrow \infty} \frac{\ell(R/I_n)}{n^d}
 \end{equation}
is 
 shown to exist in many cases in papers of 
 Ein, Lazarsfeld and Smith \cite{ELS}, Mustata \cite{Mus}, Lazarsfeld and Mustata \cite{LM} and of the author \cite{C}. It is shown in Theorem 5.9 \cite{C} that the limit (\ref{eq100})
  always exists if $\{I_n\}$ is a graded family of $m_R$-primary ideals in a $d$-dimensional unramified equicharacteristic local ring $R$ with perfect residue field. 

\begin{Example}\label{Example1} Let $k$ be a field and $R$ be the non reduced $d$-dimensional local ring $R=k[[x_1,\ldots,x_d,y]]/(y^2)$.
There exists a  graded family of $m_R$-primary ideals $\{I_n\}$ in $R$ such that the limit
$$
\lim_{n\rightarrow \infty} \frac{\ell(R/I_n)}{n^d}
$$
does not exist, even when $n$ is constrained to lie in an arithmetic sequence.
Here $\ell$ denotes the length of an $R$-module.
\end{Example}

\begin{proof}  Let $\overline x_1, \ldots,\overline x_d,\overline y$ be the classes of $x_1,\ldots,x_d,y$ in $R$.
Let $N_i$ be the set of monomials of degree $i$ in the variables $\overline x_1,\ldots,\overline x_d$.
Let $\sigma(n)$ be the function defined in (\ref{eqsigma}).
Define $M_R$-primary ideals $I_n$ in $R$ by
 $I_n=(N_n,\overline yN_{n-\sigma(n)})$ for $n\ge 1$ (and $I_0=R$). 

We first verify that $\{I_n\}$ is a graded family of ideals, by showing that $I_mI_n\subset I_{m+n}$ for all $m,n>0$.
This follows since
$$
I_mI_n=(N_{m+n},\overline yN_{(m+n)-\sigma(m)}, \overline yN_{(m+n)-\sigma(n)})
$$
and  $\sigma(j)\le \sigma(k)$ for $k\ge j$.

$R/I_n$ has a $k$-basis consisting of 
$$
\{N_i|i<n\}\mbox{ and }\{\overline yN_j|j<n-\sigma(n)\}.
$$
Thus 
$$
\ell(R/I_n)=\binom{n}{d}+\binom{n-\sigma(n)}{d}.
$$
By an similar argument to that of the proof of Example \ref{Example2}, we obtain the conclusions of this example.
\end{proof}

\section{Appendix}
In this appendix, we give a proof of Lemma \ref{LemmaKI}  stated in Section \ref{Section2}. We begin with a proof of  another lemma we will need.

\begin{Lemma} Suppose that $L$ is a graded linear series on  a projective  scheme $X$ over a field $k$, and 
 $L$ is a finitely generated $L_0$-algebra. Then 
\begin{equation}\label{eqKI7}
q(L)=\left\{\begin{array}{ll}
{\rm Krull\,\, dimension}\,(L)-1&\mbox{ if }{\rm Krull\,\, dimension}\,(L)>0\\
-\infty&\mbox{ if }{\rm Krull\,\, dimension}\,(L)=0.
\end{array}\right.
\end{equation}
\end{Lemma}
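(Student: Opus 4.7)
The plan is to reduce to the case of $L$ as a finitely generated graded $k$-algebra and then identify $\sigma(L)$ with the Krull dimension of $L$ by passing through a minimal prime. First I would observe that, because $X$ is projective over $k$, the ring $\Gamma(X,\mathcal O_X)$ is a finite-dimensional $k$-algebra, and hence so is $L_0\subset \Gamma(X,\mathcal O_X)$. Since $L$ is finitely generated as an $L_0$-algebra by hypothesis, $L$ is a finitely generated (hence Noetherian) graded $k$-algebra; in particular it has only finitely many minimal primes, each of which is homogeneous.

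The core claim is that when $\dim L\ge 1$ one has $\sigma(L)=\dim L$. For the inequality $\sigma(L)\le\dim L$: given homogeneous $y_1,\ldots,y_m$ of positive degree which are algebraically independent over $k$, the subring $A=k[y_1,\ldots,y_m]$ embeds in $L$ as a polynomial ring. Because $A$ has no nonzero nilpotents, while the intersection of the minimal primes of $L$ consists of nilpotents, a simple product argument shows that some minimal prime $\mathfrak p$ of $L$ satisfies $A\cap \mathfrak p=(0)$; hence the images of $y_1,\ldots,y_m$ in the finitely generated $k$-algebra domain $L/\mathfrak p$ are algebraically independent, so $\dim L\ge \dim(L/\mathfrak p)=\mathrm{trdeg}_k(L/\mathfrak p)\ge m$ by the standard dimension theory of finitely generated $k$-algebra domains.

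For the reverse inequality $\sigma(L)\ge \dim L$, pick a minimal prime $\mathfrak p$ with $\dim L/\mathfrak p=\dim L=:t$; since $\mathfrak p$ is homogeneous, $L/\mathfrak p$ is a graded finitely generated $k$-algebra domain of transcendence degree $t$ over $k$. Choose a finite set of homogeneous generators of $L/\mathfrak p$; a maximal algebraically independent subset is a transcendence basis of size $t$. The step requiring the most care is ensuring this basis can be taken in positive degree: the degree-zero subring $(L/\mathfrak p)_0$ is a finite-dimensional $k$-algebra that is also a domain, hence a finite field extension of $k$, so every homogeneous element of degree zero is algebraic over $k$ and cannot appear in any algebraically independent subset. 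Thus the basis consists of positive-degree homogeneous elements, which lift (via the homogeneous surjection $L\to L/\mathfrak p$) to homogeneous positive-degree $y_1,\ldots,y_t\in L$ that remain algebraically independent, since any polynomial relation in $L$ would descend to a relation in $L/\mathfrak p$. Hence $\sigma(L)\ge t=\dim L$.

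Finally, if $\dim L\ge 1$ then $\sigma(L)=\dim L\ge 1$ and $q(L)=\dim L-1$, as required. If $\dim L=0$, then $L$ is an Artinian finitely generated $k$-algebra, hence finite-dimensional over $k$, so $L_n=0$ for all $n\gg 0$; any homogeneous $y$ of positive degree then satisfies $y^n=0$ for large $n$, so $y$ is nilpotent and no homogeneous element of positive degree is algebraically independent over $k$, giving $\sigma(L)=0$ and $q(L)=-\infty$. This matches the claimed formula.
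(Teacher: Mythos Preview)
Your proof is correct, but it takes a genuinely different route from the paper's. The paper first disposes of the special case $L_0=k$ by invoking graded Noether normalization (Theorem~1.5.17 in Bruns--Herzog), and then reduces the general case to this one: starting from a maximal algebraically independent set $y_1,\ldots,y_m$ and extending to positive-degree homogeneous $L_0$-algebra generators $y_1,\ldots,y_n$, it sets $B=k[y_1,\ldots,y_n]\subset L$, observes that $\sigma(B)=\sigma(L)$ and that $L$ is a finite $B$-module (since $L_0$ is finite over $k$), so $\dim B=\dim L$, and applies the $L_0=k$ case to $B$.

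By contrast you work directly with $L$ itself, using the minimal-prime structure rather than passing to a subalgebra or invoking Noether normalization. Your two inequalities are obtained by (i) pushing an algebraically independent set into some $L/\mathfrak p$ via the product/nilradical argument, and (ii) pulling a homogeneous transcendence basis back from an $L/\mathfrak p$ of maximal dimension, after noting that degree-zero elements are algebraic since $(L/\mathfrak p)_0$ is a finite domain over $k$. This is slightly more hands-on but entirely self-contained: it avoids the citation to graded Noether normalization and makes the role of the positive-degree requirement in the definition of $\sigma(L)$ more transparent. The paper's approach, on the other hand, is shorter once one is willing to quote Noether normalization, and the reduction to a subalgebra $B$ with $B_0=k$ is a clean trick worth remembering.
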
 

\begin{proof}
In the case when $L_0=k$, the lemma  follows from  graded Noether normalization (Theorem 1.5.17 \cite{BH}). 
For a general graded linear series $L$, we always have that $k\subset L_0\subset \Gamma(X,\mathcal O_X)$, which is a finite dimensional $k$-vector space since $X$ is a projective $k$-scheme. $L$ is thus a finitely generated $k$ algebra. Let $m=\sigma(L)$ and $y_1,\ldots, y_m\in L$ be homogeneous elements of positive degree which are algebraically independent over $k$. Extend to homogeneous elements of positive degree $y_1,\ldots, y_n$ which generate $L$ as an $L_0$-algebra. 
Let $B=k[y_1,\ldots, y_n]$. We have that $\sigma(L)\le \sigma(B)\le \sigma(L)$ so $\sigma(B)=\sigma(L)$. By the first case ($L_0=k$) proven above, we have that 
$q(B)=\mbox{Krull dimension}(B)-1$ if $q(B)\ge 0$ and $q(B)=-\infty$ if $\mbox{Krull dimension}(B)=0$. 
 Since $L$ is finite over $B$, we have that $\mbox{Krull dimension}(L)=\mbox{Krull dimension}(B)$. Thus the lemma  holds.
\end{proof}

\subsection{Proof of Lemma \ref{LemmaKI}}

We will first establish the formulas (\ref{eqKI1}) and (\ref{eqKI4}).
Since $X$ is projective over $k$, we have an expression $X=\mbox{Proj}(A)$ where $A$ is the quotient of a standard graded polynomial ring 
$R=k[x_0,\ldots,x_n]$ by a homogeneous ideal $I$, which we can take to be saturated; that is, $(x_0,\ldots,x_n)$ is not an associated prime of $I$.
Let $\mathfrak p_1,\ldots,\mathfrak p_t$ be the associated primes of $I$. By graded prime avoidance (Lemma 1.5.10 \cite{BH}) there exists a form $F$ in $k[x_0,\ldots,x_n]$ of
some positive degree $c$ such that $F\not\in \cup_{i=1}^t\mathfrak p_i$. Then $F$ is a nonzero divisor on $A$, so that
$A\stackrel{F}{\rightarrow}A(c)$ is 1-1. Sheafifying, we have an injection
\begin{equation}\label{eqKI5}
0\rightarrow \mathcal O_X\rightarrow \mathcal O_X(c).
\end{equation}

Since $\mathcal O_X(c)$ is ample on $X$, there exists $f>0$ such that $\mathcal A := \mathcal L\otimes \mathcal O_X(cf)$ is ample.
From (\ref{eqKI5}) we then have
 a 1-1 $\mathcal O_X$-module homomorphism $\mathcal O_X\rightarrow \mathcal O_X(cf)$, and a 1-1 $\mathcal O_X$-module homomorphism
$\mathcal L\rightarrow \mathcal A$, which induces  inclusions of graded $k$-algebras
$$
L\subset \bigoplus_{n\ge 0}\Gamma(X,\mathcal L^n)\subset B:= \bigoplus_{n\ge 0}\Gamma(X,\mathcal A^n).
$$
There exists a positive integer $e$ such that $\mathcal A^e$ is very ample on $X$. Thus, by Theorem II.5.19 and Exercise II.9.9 \cite{H}, $B'=\bigoplus_{n\ge 0}B_{en}$ is finite over a coordinate ring $S$ of $X$
and  
\begin{equation}\label{eqKI9}
\dim_k\Gamma(X,\mathcal A^{en})=P_S(n)
\end{equation}
for $n\gg 0$ where $P_S(n)$ is the Hilbert Polynomial of $S$.

Now $B$ is a finitely generated module over $B'$. Thus
$$
\mbox{Krull dimension}(B)=\mbox{Krull dimension}(B')=\mbox{Krull dimension}(S)=\dim X+1=d+1.
$$
Further, $B$ is a finitely generated $k$-algebra so that $q(B)=\dim(X)$ by (\ref{eqKI7}).
Thus we have obtained formula (\ref{eqKI1}), 
$$
q(L)\le \dim X.
$$
$\mathcal A$ is ample on $X$, so that
$$
\dim_k\Gamma(X,\mathcal A^n)=\chi(\mathcal A^n)
$$
for $n\gg 0$. The Euler characteristic $\chi(\mathcal A^n)$ is 
a polynomial in $n$  for $n\gg 0$ by Proposition 8.8a \cite{I}, which necessarily has degree $d$ by (\ref{eqKI9}),
so  there exists a positive constant $\gamma$ such that 
$$
\dim_k L_n<\gamma n^d
$$
for all $n$, giving us formula (\ref{eqKI4}).

We will now establish formula (\ref{eqKI2}).
Suppose that $q(L)\ge 0$.
Let $L^i$ be the $k$-subalgebra of $L$ generated by $L_j$ for $j\le i$. For $i$ sufficiently large, we have that $q(L^i)=q(L)$. For such an $i$, 
since $L^i$ is a finitely generated $L_0$-algebra, we have that there exists a number $e$ such that the Veronese algebra $L^*$ defined by
$L^*_n=(L^i)_{en}$ is generated as a $L_0$-algebra in degree 1. Thus, since $L_0$ is an Artin ring,  and $L^*$ has Krull dimension $q(L)+1$ by (\ref{eqKI7}), $L^*$ has a Hilbert polynomial $P(t)$ of degree $q(L)$, satisfying  
$\ell_{L_0}(L^*_n)=P(n)$ for $n\gg 0$ (Corollary to Theorem 13.2 \cite{Ma2}), where $\ell_{L_0}$ denotes length of an $L_0$ module, and thus
$\dim_kL^*_n=(\dim_kL_0)P(n)$ for $n\gg 0$. Thus there exists a positive constant $\alpha$ such that $\dim_kL^*_n >\alpha n^{q(L)}$ for all $n$, and so
$$
\dim_kL_{en}>\alpha n^{q(L)}
$$
for all positive integers $n$, which is formula (\ref{eqKI2}).

Finally, we will establish the fourth statement of the Lemma.
Suppose that $X$ is reduced and $0\ne L_n$ for some $n>0$. Consider the graded $k$-algebra homomorphism
$\phi:k[t]\rightarrow L$ defined by $\phi(t)=z$ where $k[t]$ is graded by giving $t$ the weight $n$. The kernel of $\phi$ is weighted homogeneous, so it is either 0 or 
$(t^s)$ for some $s>1$.  Thus if $\phi$ is not 1-1 then there exists $s>1$ such that $z^s=0$ in $L_{ns}$. We will show that this cannot happen.
Since $z$ is a nonzero global section of $\Gamma(X,\mathcal L^n)$, there exists $Q\in X$ such that the image of $z$ in $\mathcal L^n_Q$ is $\sigma f$ where $f\in \mathcal O_{X,Q}$ is nonzero and $\sigma$ is a local generator of $\mathcal L^n_Q$.
 The image of $z^s$ in $\mathcal L^{sn}_Q=\sigma^s\mathcal O_{X,Q}$ is $\sigma^sf^s$. We have that $f^s\ne 0$ since
$\mathcal O_{X,Q}$ is reduced. Thus $z^s\ne 0$. We thus have that $\phi$ is 1-1, so $q(L)\ge 0$.


\begin{thebibliography}{1000000000}
\bibitem{BFJ} S. Bouksom, C. Favre, M. Jonsson, Differentiability of volumes of divisors and a problem of Teissier, Journal of Algebraic Geometry 18 (2009), 279 - 308.
\bibitem{BH} W. Bruns and J. Herzog, Cohen-Macaulay rings, Cambridge University Press, 1993.
\bibitem{C} S.D. Cutkosky, Multiplicities associated to graded families of ideals, to appear in Algebra and Number Theory.
\bibitem{CS} S.D. Cutkosky and V. Srinivas, On a problem of Zariski on dimensions of linear systems, Ann. Math. 137 (1993), 531 - 559.
\bibitem{ELMNP1} L. Ein, R. Lazarsfeld, M. Mustata, M. Nakamaye, M. Popa, Asymptotic invariants of line bundles, Pure Appl. Math. Q. 1 (2005), 379-403.
\bibitem{ELS} L. Ein, R. Lazarsfeld and K. Smith, Uniform Approximation of Abhyankar valuation ideals in smooth function fields,
Amer. J. Math. 125 (2003), 409 - 440. 
\bibitem{Fuj} T. Fujita, Approximating Zariski decomposition of big line bundles, Kodai Math. J. 17 (1994), 1-3.
\bibitem{H} R. Hartshorne, Algebraic Geometry, Springer Verlag 1977.
\bibitem{I} S. Iitaka, Algebraic Geometry, Springer Verlag, 1982.
\bibitem{KK} K. Kaveh and G. Khovanskii, Newton-Okounkov bodies, semigroups of integral points, graded algebras and intersection theory, to appear in Annals of Math., arXiv:0904.3350v3.
\bibitem{KKu} H. Knaf and F.-V. Kuhlmann, Abhyankar places admit local uniformization in any characteristic, Ann. Scient. Ec. Norm. Sup 30 (2005), 833 - 846.
\bibitem{La} R. Lazarsfeld, Positivity in Algebraic Geometry, I and II, Ergebnisse der Mathematik und ihrer Grenzgebiete, vols 48 and 49, Springer Verlag, Berlin 2004.
\bibitem{LM} R. Lazarsfeld and M. Mustata, Convex bodies associated to linear series, Ann. Sci. Ec. Norm. Super 42 (2009) 783 - 835.
\bibitem{Ma2} H. Matsumura, Commutative Ring Theory, Cambridge Univ. Press, 1986.
\bibitem{Mus} M. Mustata, On multiplicities of graded sequence of ideals, J. Algebra 256 (2002), 229-249.
\bibitem{Ok1} A. Okounkov, Brunn-Minkowski inequality for multiplicities, Invent. Math. 125 (1996), 405 - 411.
\bibitem{T} S. Takagi, Fujita's approximation theorem in positive characteristics, J. Math. Kyoto Univ, 47 (2007), 179 - 202.
\bibitem{ZS1} O. Zariski and P. Samuel, Commutative Algebra Vol I, Van Nostrand 1958.

\end{thebibliography}
\end{document}